\documentclass[reqno]{amsart}
\usepackage{amssymb,latexsym,amsmath,amsthm,enumerate}
\usepackage{mathtools}
\usepackage{cite}
\usepackage[hidelinks]{hyperref}
\hypersetup{
 pdftitle={Strong nonfinite basability of the three-element flat semiring S7},
 pdfauthor={Mengya Yue and Xiaolei Shao}
}
\makeatletter
\@ifundefined{subjclassname@2020}{%
 \@namedef{subjclassname@2020}{\textup{2020} Mathematics Subject Classification}}{}
\makeatother
\theoremstyle{plain}
\newtheorem{theorem}{Theorem}[section]
\newtheorem{lemma}[theorem]{Lemma}
\newtheorem{proposition}[theorem]{Proposition}
\newtheorem{corollary}[theorem]{Corollary}
\theoremstyle{definition}

\newcommand{\Var}{\mathsf V}
\newcommand{\NN}{\mathbb N_0}
\newcommand{\HH}{\mathcal H}

\newcommand{\cont}{\operatorname{c}}
\newcommand{\len}[1]{\lVert #1\rVert_1}
\newcommand{\Pow}{\mathcal P^{+}}
\newcommand{\Id}{\operatorname{Id}}
\newcommand{\1}{\mathbf 1}

\renewcommand{\ge}{\geqslant}
\renewcommand{\le}{\leqslant}

\title[Strong nonfinite basability of $S_7$]{Strong nonfinite basability of the three-element flat semiring $S_7$}
\date{}

\author{Mengya Yue}
\address{School of Mathematics, Northwest University, Xi'an, 710127, Shaanxi, P.R. China}
\email{myayue@yeah.net}

\author{Xiaolei Shao}
\address{School of Mathematics, Northwest University, Xi'an, 710127, Shaanxi, P.R. China}
\email{xiaoleishao@yeah.net}

\subjclass[2020]{16Y60, 03C05, 08B15}
\keywords{Additively idempotent semiring, strongly nonfinitely based, axiomatic rank, flat semiring, hypergraph}

\begin{document}

\begin{abstract}
We prove that the three-element flat semiring $S_7$ is strongly
nonfinitely based within the class of additively idempotent semirings.
More generally, let $\mathcal N$ be the subvariety of $\Var(S_7)$
defined by $x^2y\approx x^2$. For every finite ai-semiring $A$,
each variety $\mathcal W$ satisfying
$\mathcal N\subseteq\mathcal W\subseteq\Var(A)$ has infinite
axiomatic rank.
\end{abstract}

\maketitle

\section{Introduction}\label{sec:intro}

An \emph{additively idempotent semiring}, or \emph{ai-semiring}, is an
algebra $(S,+,\cdot)$ in which $(S,+)$ is a semilattice, $(S,\cdot)$ is
a semigroup, and multiplication distributes over addition on both
sides. Until Section~\ref{sec:constants}, all varieties considered
have the binary signature $\{+,\cdot\}$. No additive identity or multiplicative identity is
required, and no constant symbols are included. We write $\Var(A)$ for
the variety generated by an algebra $A$.

An algebra is \emph{finitely based} if all its identities follow from
a finite set of identities. The finite basis problem asks which
algebras have this property. For finite semigroups, Perkins~\cite{Perkins}
gave some of the first nonfinitely based examples. In the broader
setting of finite algebras, McKenzie~\cite{McKenzie} proved that the
finite basis problem is undecidable. For semirings, early results
include the absence of a finite basis for the max-plus algebra of the
natural numbers~\cite{AEI} and finite nonfinitely based semirings
constructed by Dolinka~\cite{D07,D09inherent,D09remark}.

The distinction between the identities of a semiring and those of its
multiplicative reduct is central to the present paper. Jackson, Ren
and Zhao~\cite{JRZ} constructed finite nonfinitely based ai-semirings
whose multiplicative reducts are finitely based. Among them is the
three-element commutative semiring $S_7$, with operations
\[
\begin{array}{c|ccc}
 +&1&a&0\\\hline
 1&1&0&0\\
 a&0&a&0\\
 0&0&0&0
\end{array}
\qquad
\begin{array}{c|ccc}
 \cdot&1&a&0\\\hline
 1&1&a&0\\
 a&a&0&0\\
 0&0&0&0
\end{array}.
\]
Here $1$ is a multiplicative identity and $a^2=0$. The element $0$
absorbs both operations; in particular, it is the greatest element of
the additive semilattice, rather than an additive identity. The symbols
$1$ and $0$ name elements of this algebra, not symbols in the signature.
Its addition is \emph{flat}: any two distinct elements have sum $0$.

The semiring $S_7$ is the smallest possible example of this behaviour.
The variety generated by all two-element ai-semirings is hereditarily
finitely based~\cite{SR}. The analysis of three-element ai-semirings
in~\cite{ZRCS}, together with the proof for $S_7$ in~\cite{JRZ}, shows
that $S_7$ is the unique nonfinitely based three-element ai-semiring up
to isomorphism. The order-four problem has led to classifications for
several additive semilattice types~\cite{RLZC,YRZS,RLYC} and to further
individual nonfinitely based examples~\cite{YRfour,YRGfour}.

The finite basis problem has also been studied for endomorphism semirings
of finite semilattices with zero~\cite{D09endo}, finite chains, and
arbitrary finite semilattices~\cite{GVchains,DGV}.
For naturally ordered Brandt semigroups, the ai-semiring based on the
Brandt monoid is nonfinitely based~\cite{VolkovBrandt}, whereas the
related semirings based on $B_0$ and $B_2$ have finite
bases~\cite{ShapB0,ShapB2}.

A finite ai-semiring $B$ is \emph{strongly nonfinitely based} if every
finite ai-semiring $A$ for which $B\in\Var(A)$ is nonfinitely based.
The question whether $S_7$ has this property is stated explicitly as
Problem~1.1 in Gao, Ren and Zhao~\cite{GRZcontinuum} and as Problem~1.2
in Yue, Ren and Gao~\cite{YRGpower}. Several special cases are known.
Wu, Ren and Zhao~\cite{WRZ} proved that adjoining an additive identity
to $S_7$ gives a nonfinitely based semiring $S_7^0$. Further results
on this adjunction appear in~\cite{ZW}. Gao, Ren and
Zhao~\cite{GRZcontinuum} studied intervals above $\Var(S_7)$, including
the continuum of varieties between $\Var(S_7)$ and $\Var(S_7^0)$.
We prove that $S_7$ is strongly nonfinitely based in
Corollary~\ref{thm:main}.

Flat extensions of groups and limit varieties were studied
in~\cite{RJZL}. Here a
\emph{limit variety} is a nonfinitely based variety all of whose proper
subvarieties are finitely based. A further limit variety generated by
a six-element ai-semiring was obtained in~\cite{LRYlimit}. In an
unpublished manuscript, Yue~\cite{YueSub} classifies the finitely based
subvarieties and the limit subvarieties of $\Var(S_7^0)$. These
questions concern subvarieties of a fixed variety. The main theorem
applies to intervals with a fixed lower endpoint and an upper endpoint
generated by any finite ai-semiring.

Put
\begin{equation}\label{eq:nilvariety}
 \mathcal N=\Var(S_7)\cap\operatorname{Mod}(x^2y\approx x^2),
\end{equation}
where $\operatorname{Mod}(\Sigma)$ denotes the class of ai-semirings
satisfying the identities in $\Sigma$. This is the nil subvariety
studied in~\cite{GJRZ}. It is a proper subvariety of $\Var(S_7)$,
since its defining identity fails in $S_7$ at $x=1$ and $y=a$.
It is also important to distinguish $\mathcal N$ from
$\Var(S_c(abc))$. For a word $a_1\cdots a_k$ with distinct letters,
$S_c(a_1\cdots a_k)$ is the flat semiring of its nonempty commutative
divisors, together with an absorbing element $0$; a product is $0$
when it is no longer a divisor. One has
\[
 \Var(S_c(abc))\subsetneq\mathcal N\subsetneq\Var(S_7).
\]
Indeed, all the semirings $S_c(a_1\cdots a_k)$ belong to $\mathcal N$
by~\cite{GJRZ}; the identity $xyzt\approx x^2$ holds in $S_c(abc)$
and fails in $S_c(abcd)$. Related finite basis results for flat
nil-semirings and flat semirings with nilpotent multiplicative reducts
appear in~\cite{WZR,GRnil}.

A variety has \emph{infinite axiomatic rank} if, for every positive
integer $n$, its identities involving at most $n$ variables do not
form a basis for all its identities. This implies that the variety is
nonfinitely based. For varieties generated by finite algebras in a
finite signature, finite axiomatic rank is equivalent to finite
basability; see Appendix~\ref{sec:rank}.

\begin{theorem}\label{thm:nilinterval}
Let $A$ be a finite ai-semiring and let $\mathcal W$ be an ai-semiring
variety such that
\[
 \mathcal N\subseteq\mathcal W\subseteq\Var(A).
\]
For every positive integer $n$, there is a finite commutative flat
semiring $B_n$, with nilpotent multiplicative reduct and satisfying
$x^2\approx y^2$, such that
\begin{enumerate}[$(1)$]
\item every subsemiring of $B_n$ generated by at most $n$ elements
belongs to $\mathcal N$;
\item $B_n\notin\Var(A)$.
\end{enumerate}
Thus $B_n$ satisfies every identity of $\mathcal W$ in at most $n$
variables but does not belong to $\mathcal W$. In particular,
$\mathcal W$ has infinite axiomatic rank.
\end{theorem}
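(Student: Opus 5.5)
The plan is to build $B_n$ as a flat semiring $B(H)$ attached to a finite hypergraph $H$. The hypergraph has to be locally tree-like, which will give $(1)$, and globally rigid, which will give $(2)$. Concretely, $H$ will be a $k$-uniform hypergraph of girth $>n$ (in fact girth larger than some function of $n$) and of chromatic number $>|A|^{m}$ for a suitable $m$ depending on $|A|$; such hypergraphs exist by the Erd\H{o}s--Hajnal / Lov\'asz constructions. The uniformity $k$ is chosen large compared with $|A|$.

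\emph{Construction.} The elements of $B(H)$ are nonzero \emph{monomials} together with an absorbing $0$. A monomial is a product of distinct vertices lying together in a common edge, glued along the edges. Where two edges share a vertex, the corresponding monomials interact, so that cycles in $H$ produce genuinely new coincidences among products. Multiplication is the commutative product, with value $0$ whenever the result is no longer a monomial. Addition is flat. One checks directly that $B(H)$ is commutative, that its multiplicative reduct is nilpotent of class bounded by $k+1$, and that every square is $0$, so $x^2\approx y^2$ holds. The design requirement is that the gluing is invisible on acyclic parts of $H$ but not on cycles. If the gluing were trivial, $B(H)$ would be a subdirect product of copies of $S_c(e)$ and would lie in $\mathcal N$, which must be avoided.

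\emph{Step 1, property $(1)$.} Let $b_1,\dots,b_n$ be elements of $B(H)$. Only the vertices occurring in the $b_i$, together with the edges through them, matter. Since the girth exceeds $n$ (with the appropriate bound), the relevant sub-hypergraph is a hyperforest. On a hyperforest the gluing is trivial. For each edge $e$ of the forest there is a homomorphism from the generated subsemiring onto $S_c(a_1\cdots a_k)$: it sends monomials inside $e$ to themselves and everything else to $0$. This is a homomorphism because a product lands inside $e$ only when all its factors lie in $e$, and flat sums go to $0$ consistently. These maps separate points, so the subsemiring embeds in a product of semirings $S_c(a_1\cdots a_k)$, which lie in $\mathcal N$ by \cite{GJRZ}.

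\emph{Step 2, property $(2)$.} Suppose $B(H)\in\Var(A)$. Then some nonzero monomial $u$ is separated from $0$ by a homomorphism $\varphi$ into a finite power of $A$, and one coordinate already suffices: there is $\varphi\colon C\to A$, with $C$ a subsemiring, such that $\varphi(u)\neq\varphi(u+w)$ for a suitable $w$. Colour each vertex $v$ by the tuple of values of $\varphi$ on a bounded set of monomials through $v$. There are at most $|A|^{m}$ colours, so the chromatic bound yields a monochromatic edge. Along a monochromatic edge, a Ramsey/pigeonhole argument inside the $k$-element edge, using that $A$ is finite and additively idempotent, forces $\varphi$ to identify a product with a sum of distinct products. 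That is, $\varphi$ satisfies in $A$ an identity of the form $x_1\cdots x_k\approx x_1\cdots x_k+ y_1\cdots y_k$ restricted to this edge. Propagating this identity around a cycle of $H$ through $u$, via the gluing, yields $\varphi(u)=\varphi(u+w)$, a contradiction.

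The last sentence of the theorem then follows immediately. Every identity of $\mathcal W$ in at most $n$ variables holds in $\mathcal N$, and is therefore checked on $n$-generated subsemirings, so it holds in $B_n$; but $B_n\notin\mathcal W$ because $B_n\notin\Var(A)$.

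I expect the main obstacle to be the choice of gluing in the construction together with Step 2. The gluing must be strong enough that a homomorphism into a small $A$ cannot respect it on a monochromatic configuration, yet trivial on hyperforests so that Step 1 survives. My first attempt would be to let each vertex play the role of an $a$-like element in one edge and a $1$-like (unit-absorbing) element in the adjacent edge, mimicking the way $S_7$'s $1$ differs from its nilpotent part. The colouring argument would then run on the pair of values of $\varphi$ at these two roles.
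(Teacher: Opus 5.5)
Your closing deduction from (1) and (2) is fine, but the proposal never fixes the construction, and both steps fail as sketched. Any admissible gluing must keep the multiplicative reduct $0$-cancellative: with flat addition, the law $x(y+z)=xy+xz$ for $y\ne z$ says exactly that $xy=xz\ne0$ never occurs. The missing idea is the paper's gluing. All full blocks are identified to one element $p$, so that the sum $t_{\HH}$ of all block products evaluates to $p$ rather than to $0$. All complements $E\setminus R$ with $|R|\le d-1$ are also identified, which restores $0$-cancellation. This gluing is global, not ``invisible on hyperforests'': complementary generators in a block $E$ and in a disjoint block $F$ already have the same product $p$. So (1) cannot come from a subdirect decomposition into copies of $S_c(e)$. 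The paper proves (1) through the transversal criterion of Proposition~\ref{prop:criterion}, using only $|E\cap F|<d$ and $k>n(d-1)$ (Lemmas~\ref{lem:topfibre} and~\ref{lem:link}). Girth plays no role, and the hypergraph there is dense: every set of at most $D$ vertices lies in a block.

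Step 2 breaks at its first sentence. Since $\Var(A)=\mathbf{HSP}(A)$, membership gives a subsemiring of $A^m$ mapping \emph{onto} $B(H)$, not homomorphisms from $B(H)$ or its subsemirings into $A$. For instance, $S_c(abcd)\in\mathcal N\subseteq\Var(S_7)$, yet every homomorphism $\varphi\colon S_c(abcd)\to S_7$ has $\varphi(abcd)=\varphi(0)$. Indeed, $\varphi(0)=\varphi(x)^2$ for each letter $x$ forces either every letter to $1$ or every letter into $\{a,0\}$.

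Nonmembership must therefore be witnessed by an identity of $A$. The paper uses $t_{\HH}\approx t_{\HH}+q_{\HH}$, proved in $\Pow(M)$ and transferred to $A$ by Proposition~\ref{prop:powersum}. Its validity in $\Pow(M)$ means: for every colouring $v\mapsto\alpha_v\in M$ whose ordered product is $s$, some block admits an ordering with product $s$. A monochromatic block, which is all that chromatic number provides, has product $\alpha^k$, which is unrelated to $s$. Your Ramsey and cycle-propagation steps have no content until the gluing is fixed.

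The paper obtains the needed property from two ingredients absent from your plan. The first is semilinearity of the Parikh image of the language $L_s$, which gives a core $c$ of length at most $D$ and a period $h$ supported on colour classes of size at least $\varepsilon N$ (Lemma~\ref{lem:core}). The second is a random hypergraph that realizes arbitrary prescribed colour counts outside any set of at most $D$ vertices (Lemma~\ref{lem:random}). Note that for $M=\{1,a,0\}$, the reduct of $S_7$ itself, validity in $\Pow(M)$ already forces any two vertices into a common block. So this route is incompatible with girth greater than $3$.
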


\begin{corollary}\label{thm:main}
Let $A$ be a finite ai-semiring. If $S_7\in\Var(A)$, then $\Var(A)$
has infinite axiomatic rank. Consequently, $S_7$ is strongly
nonfinitely based.
\end{corollary}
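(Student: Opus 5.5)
Since $S_7\in\Var(A)$, the variety $\Var(S_7)$ is contained in $\Var(A)$. By \eqref{eq:nilvariety}, $\mathcal N\subseteq\Var(S_7)$. Hence
\[
\mathcal N\subseteq\Var(S_7)\subseteq\Var(A),
\]
and we may apply Theorem~\ref{thm:nilinterval} with $\mathcal W=\Var(A)$.

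Fix a positive integer $n$. Theorem~\ref{thm:nilinterval} gives a finite semiring $B_n\notin\Var(A)$ whose $n$-generated subsemirings all lie in $\mathcal N\subseteq\Var(A)$. Suppose $u\approx v$ is an identity of $\Var(A)$ in at most $n$ variables. Any evaluation of $u$ and $v$ in $B_n$ takes place inside the subsemiring generated by the at most $n$ values assigned to the variables. That subsemiring lies in $\Var(A)$, so it satisfies $u\approx v$, and therefore so does $B_n$. Thus $B_n$ satisfies every identity of $\Var(A)$ in at most $n$ variables but does not satisfy all identities of $\Var(A)$. So for this $n$ the $n$-variable identities do not form a basis. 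As $n$ was arbitrary, $\Var(A)$ has infinite axiomatic rank.

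For the second claim, we use that infinite axiomatic rank implies nonfinite basability. Indeed, if $\Sigma$ were a finite basis, it would involve at most some number $n$ of variables. Then the identities in at most $n$ variables would form a basis, a contradiction. (This direction is also the trivial half of Appendix~\ref{sec:rank}.) Hence every finite ai-semiring $A$ with $S_7\in\Var(A)$ is nonfinitely based. This is exactly the definition of $S_7$ being strongly nonfinitely based.

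All the substantive work lies in Theorem~\ref{thm:nilinterval}, in particular in constructing $B_n$ with $B_n\notin\Var(A)$ uniformly over all finite $A$. Given that theorem, the only point needing care is the local-to-global step above: identities in at most $n$ variables are checked on $n$-generated subsemirings. This step is routine.
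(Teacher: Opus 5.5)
Your proposal is correct and follows the paper's proof: you note $\mathcal N\subseteq\Var(S_7)\subseteq\Var(A)$, apply Theorem~\ref{thm:nilinterval} with $\mathcal W=\Var(A)$, and observe that a finite basis would use only finitely many variables. The local-to-global argument you write out is already part of the conclusion of Theorem~\ref{thm:nilinterval} (compare Lemma~\ref{lem:localglobal}), so repeating it is harmless but not needed.
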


\begin{proof}
The hypothesis gives
$\mathcal N\subseteq\Var(S_7)\subseteq\Var(A)$.
Apply Theorem~\ref{thm:nilinterval} with $\mathcal W=\Var(A)$.
Every finite identity basis uses only finitely many variables, so
infinite axiomatic rank excludes such a basis.
\end{proof}

For comparison, Gao, Jackson, Ren and Zhao~\cite{GJRZ} proved that
every variety in the interval
$[\Var(S_c(abc)),\Var(S_7^0)]$ is nonfinitely based. Its lower
endpoint is smaller than $\mathcal N$. Theorem~\ref{thm:nilinterval}
permits an arbitrary finite ai-semiring at the upper endpoint, subject
to containment of $\mathcal N$. In particular, it implies infinite
axiomatic rank for every variety between $\Var(S_7)$ and any
ai-semiring variety generated by a finite algebra and containing $S_7$. The known
results for $S_7^0$ therefore follow at the level of nonfinite basability.
Their descriptions of explicit bases
and subvariety lattices remain separate results.

The proof constructs, for each $n$, a finite semiring that belongs
locally to $\mathcal N$ but fails an identity of $A$. The construction
starts from a uniform hypergraph. Bounded intersections of distinct
edges allow us to control subsemirings generated by at most $n$
elements using the transversal description of the identities of
$S_7$ from~\cite[Proposition~5.5]{JRZ}. A probabilistic construction
then supplies the required extensions of prescribed vertex sets and
colour multiplicities. To obtain the separating identity, we regard
words with a specified product in the finite multiplicative reduct of
$A$ as a regular language. Semilinearity of the corresponding
multiplicity vectors gives bounds independent of the length of the
words. The separating polynomial includes every ordering of each
hyperedge, which allows this step to apply to noncommutative reducts.

Section~\ref{sec:preliminaries} recalls the identity criterion for
$S_7$. Sections~\ref{sec:construction} and~\ref{sec:local} construct
the hypergraph semirings and prove the local membership result.
Section~\ref{sec:random} establishes the required hypergraph
extension lemma. Section~\ref{sec:languages} proves the regular
language statements used in Section~\ref{sec:separation} to obtain
the separating identities. Theorem~\ref{thm:nilinterval} is then
proved in Section~\ref{sec:conclusion}.
Section~\ref{sec:extensions} gives uniform consequences for finite
ai-semirings of bounded order and for nilpotency indices, and explains
the distinction from inherent nonfinite basability.
Section~\ref{sec:constants} treats signatures with distinguished
constants.

\section{Preliminaries}\label{sec:preliminaries}

\begingroup
\emergencystretch=1em
We use the standard correspondence between varieties and identities,
and Birkhoff's characterization of the variety generated by an algebra;
see Burris and Sankappanavar~\cite{BS}. All algebras and subalgebras are
nonempty. Unless a signature with constants is explicitly specified,
a generated subsemiring is formed using the two binary operations only.
For a variety $\mathcal W$, write $\Id_n(\mathcal W)$ for the identities
of $\mathcal W$ using at most $n$ distinct variables.
\par\endgroup

We shall use the following criterion for infinite axiomatic rank.

\begin{lemma}\label{lem:localglobal}
Let $\mathcal U\subseteq\mathcal W$ be varieties of ai-semirings.
Suppose that for each positive integer $n$ there is an ai-semiring $B_n$
outside $\mathcal W$ such that every subsemiring of $B_n$ generated by
at most $n$ elements belongs to $\mathcal U$.
Then $B_n$ satisfies $\Id_n(\mathcal W)$, and $\mathcal W$ has infinite
axiomatic rank.
\end{lemma}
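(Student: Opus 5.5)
The statement to prove is Lemma~\ref{lem:localglobal}. The plan has two parts: first show that $B_n$ satisfies $\Id_n(\mathcal W)$, then deduce infinite axiomatic rank.

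\textbf{Step 1: $B_n$ satisfies $\Id_n(\mathcal W)$.} Fix an identity $u\approx v$ in $\Id_n(\mathcal W)$ whose variables lie among $x_1,\dots,x_m$ with $m\le n$. Take an arbitrary assignment $x_i\mapsto b_i\in B_n$, and let $C$ be the subsemiring of $B_n$ generated by $b_1,\dots,b_m$.
\begin{itemize}
\item $C$ has at most $n$ generators, so by hypothesis $C\in\mathcal U\subseteq\mathcal W$.
\item Hence $C$ satisfies $u\approx v$.
\item The values of the terms $u$ and $v$ under this assignment are computed inside $C$, because $C$ is closed under $+$ and $\cdot$. They therefore agree in $C$, and so they agree in $B_n$.
\end{itemize}
Since the assignment was arbitrary, $B_n\models u\approx v$. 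There is one degenerate case: if $u\approx v$ has no variables at all, then it is not a term identity in this constant-free signature, so it does not arise. (If one insists on counting it, any single element generates a subsemiring that handles it.)

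\textbf{Step 2: infinite axiomatic rank.} Suppose, for a contradiction, that $\Id_n(\mathcal W)$ is a basis for the identities of $\mathcal W$ for some $n$. Then $\operatorname{Mod}(\Id_n(\mathcal W))=\mathcal W$. Here $\operatorname{Mod}$ is taken within ai-semirings, which is harmless because the ai-semiring axioms use at most three variables. For $n\ge3$ these axioms already belong to $\Id_n(\mathcal W)$, and $B_n$ is assumed to be an ai-semiring in any case. By Step 1, $B_n\in\operatorname{Mod}(\Id_n(\mathcal W))=\mathcal W$, which contradicts $B_n\notin\mathcal W$. For small $n$, note that $\Id_n(\mathcal W)\subseteq\Id_{n'}(\mathcal W)$ whenever $n\le n'$. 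So if $\Id_n(\mathcal W)$ were a basis, $\Id_{n'}(\mathcal W)$ would be one too, and taking $n'\ge3$ reduces to the previous case.

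Neither step is a real obstacle. The only point needing care is that term evaluation stays inside the generated subsemiring, and this is immediate from the definition of a subalgebra.
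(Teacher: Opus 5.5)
Your proof is correct and follows the paper's argument: you evaluate the identity inside the subsemiring generated by the assigned values, which lies in $\mathcal U\subseteq\mathcal W$, and then derive a contradiction from $B_n\notin\mathcal W$ if $\Id_n(\mathcal W)$ were a basis. Your extra discussion of small $n$ and of the ai-semiring axioms is harmless but unnecessary. Since $B_n$ is an ai-semiring by hypothesis, $B_n\in\mathcal W$ follows at once for every $n$.
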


\begin{proof}
Let $u\approx v$ belong to $\Id_n(\mathcal W)$, and assign its variables
arbitrarily to elements of $B_n$. These values generate a subsemiring
$C$ belonging to $\mathcal U\subseteq\mathcal W$. Since $u\approx v$
holds in $C$, it holds under the given assignment. Thus $B_n$ satisfies
every identity in $\Id_n(\mathcal W)$.
If these identities implied all identities of $\mathcal W$, Birkhoff's
theorem would give $B_n\in\mathcal W$, a contradiction. This applies
to every positive integer $n$.
\end{proof}

\subsection{Power semirings}

For a semigroup $M$, let $\Pow(M)$ be the algebra of its nonempty
subsets, with union as addition and setwise multiplication
\[
 XY=\{xy:x\in X,\ y\in Y\}.
\]
Associativity of the product follows from associativity in $M$;
the distributive laws follow directly from the definition. Thus
$\Pow(M)$ is an ai-semiring. The full power semiring $\mathcal P(M)$
also includes the empty set, which is an additive identity and a
multiplicative zero. We will use both constructions in
Section~\ref{sec:constants}.

\begin{proposition}\label{prop:powersum}
Let $A$ be a finite ai-semiring with multiplicative reduct $M$.
The map
\[
 \pi:\Pow(M)\longrightarrow A,\qquad
 \pi(X)=\sum_{x\in X}x,
\]
is an onto homomorphism. In particular, every identity of $\Pow(M)$
holds in $A$.
\end{proposition}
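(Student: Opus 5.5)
We need to prove Proposition~\ref{prop:powersum}: for a finite ai-semiring $A$ with multiplicative reduct $M$, the map $\pi(X)=\sum_{x\in X}x$ is an onto homomorphism $\Pow(M)\to A$.

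The plan is to verify the three required properties directly from the ai-semiring axioms of $A$, and then deduce the statement about identities from the standard fact that varieties are closed under homomorphic images.

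\emph{Well-definedness and surjectivity.} Each $X\in\Pow(M)$ is a nonempty subset of the finite set $A$. Because $(A,+)$ is a semilattice, addition is associative, commutative and idempotent. Hence the finite sum $\sum_{x\in X}x$ does not depend on how the elements of $X$ are ordered or bracketed, so $\pi$ is well defined. Surjectivity is immediate, since $\pi(\{a\})=a$ for every $a\in A$.

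\emph{Additivity.} For nonempty $X,Y\subseteq A$, the sum $\pi(X)+\pi(Y)$ collects all elements of $X$ and all elements of $Y$. Elements lying in $X\cap Y$ appear twice, but idempotence of $+$ collapses each repetition. Therefore $\pi(X)+\pi(Y)=\sum_{z\in X\cup Y}z=\pi(X\cup Y)$.

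\emph{Multiplicativity.} This is the only step needing some care. Using both distributive laws repeatedly (an easy induction on $|X|$ and $|Y|$), we expand
\[
 \pi(X)\pi(Y)=\Bigl(\sum_{x\in X}x\Bigr)\Bigl(\sum_{y\in Y}y\Bigr)=\sum_{(x,y)\in X\times Y}xy .
\]
Distinct pairs $(x,y)$ may give the same product $xy$, so the right-hand sum can contain repeated terms. Idempotence and commutativity of $+$ reduce it to the sum over the set $\{xy:x\in X,\ y\in Y\}=XY$, which equals $\pi(XY)$.

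\emph{Identities.} Since $\pi$ is an onto homomorphism, $A$ is a homomorphic image of $\Pow(M)$. Suppose $u\approx v$ holds in $\Pow(M)$, and take any assignment $a_i$ of the variables into $A$. Lift it to the assignment $\{a_i\}$ in $\Pow(M)$. Because $\pi$ is a homomorphism, $\pi(u(\{a_i\}))=u(a_i)$ and $\pi(v(\{a_i\}))=v(a_i)$. The two values $u(\{a_i\})$ and $v(\{a_i\})$ are equal in $\Pow(M)$, so $u(a_i)=v(a_i)$ in $A$. Hence every identity of $\Pow(M)$ holds in $A$.

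There is no real obstacle in this argument. The one point needing attention is the bookkeeping of repeated terms in the distributive expansion, and idempotence of addition disposes of it.
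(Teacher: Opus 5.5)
Your proof is correct and follows essentially the same route as the paper: order-independence of the sum, idempotence for $\pi(X\cup Y)=\pi(X)+\pi(Y)$, both distributive laws with idempotence absorbing repeated products for $\pi(XY)=\pi(X)\pi(Y)$, and singletons for surjectivity. Your explicit transfer of identities via singleton assignments only spells out what the paper leaves implicit, namely that identities are preserved under homomorphic images.
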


\begin{proof}
The sum defining $\pi$ is independent of the order of its summands.
Additive idempotence gives $\pi(X\cup Y)=\pi(X)+\pi(Y)$. By the two
distributive laws,
\[
 \pi(XY)=\sum_{x\in X,\,y\in Y}xy
 =\left(\sum_{x\in X}x\right)\left(\sum_{y\in Y}y\right)
 =\pi(X)\pi(Y).
\]
If different pairs give the same product, repeated copies of that
product are removed by additive idempotence. Finally,
$\pi(\{a\})=a$ for each $a\in A$, so $\pi$ is onto.
\end{proof}

\subsection{Identities of \texorpdfstring{$S_7$}{S7}}

By distributivity, every semiring term can be written as a finite
nonempty sum of nonempty words. We call such an expression a polynomial.
For a polynomial $u$, let $\cont(u)$ be the set of variables occurring
in $u$, and put
\[
 \delta(u)=\left\{Z\subseteq\cont(u):
      \begin{array}{l}
      \text{every word in }u\text{ has exactly one}\\
      \text{occurrence of a variable from }Z
      \end{array}\right\}.
\]
We refer to the members of $\delta(u)$ as transversals of $u$.
Occurrences are counted with multiplicity: if a variable occurs twice
in a word, a transversal containing that variable cannot meet that word
exactly once. The order of letters in a word does not affect either
$\cont(u)$ or $\delta(u)$.

The next characterization is due to Jackson, Ren and
Zhao~\cite[Proposition~5.5]{JRZ}.

\begin{proposition}\label{prop:criterion}
For semiring polynomials $u$ and $v$,
\[
 S_7\models u\approx v
 \quad\Longleftrightarrow\quad
 \cont(u)=\cont(v)\quad\text{and}\quad\delta(u)=\delta(v).
\]
\end{proposition}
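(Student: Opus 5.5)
The plan is to compute directly the term function of a polynomial on $S_7$. Every assignment $\varphi$ of the variables into $S_7=\{1,a,0\}$ is then described by two pieces of data:
\[
 O_\varphi=\varphi^{-1}(0),\qquad Z_\varphi=\varphi^{-1}(a).
\]
The first step is to evaluate a single word $w$. Since $S_7$ is commutative, $1$ is a multiplicative identity, $a^2=0$ and $0$ is absorbing, the value $\varphi(w)$ depends only on the occurrences of variables from $O_\varphi$ and $Z_\varphi$ in $w$, counted with multiplicity. We get:
\begin{itemize}
\item $\varphi(w)=0$ if $w$ contains a variable of $O_\varphi$, or if $w$ has at least two occurrences of variables from $Z_\varphi$;
\item $\varphi(w)=a$ if $w$ avoids $O_\varphi$ and has exactly one occurrence from $Z_\varphi$;
\item $\varphi(w)=1$ if $w$ avoids both sets.
\end{itemize}

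The second step evaluates a sum of words. Because the addition is flat and $0$ absorbs it, a sum is nonzero exactly when all summands share a common nonzero value. Hence $\varphi(u)=1$ iff $O_\varphi\cap\cont(u)=\emptyset$ and $Z_\varphi\cap\cont(u)=\emptyset$. Similarly, $\varphi(u)=a$ iff $O_\varphi\cap\cont(u)=\emptyset$ and $Z_\varphi\cap\cont(u)\in\delta(u)$. In every other case $\varphi(u)=0$. Note that the empty set is never a transversal, since every word must meet it exactly once. Hence the cases ``value $1$'' and ``value $a$'' are disjoint, consistent with the formula. Also, only $Z_\varphi\cap\cont(u)$ matters, because variables outside $\cont(u)$ do not occur in $u$.

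Sufficiency now follows immediately. If $\cont(u)=\cont(v)$ and $\delta(u)=\delta(v)$, the three conditions above coincide for $u$ and $v$ under every $\varphi$, so $\varphi(u)=\varphi(v)$.

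For necessity I argue by contraposition, using two explicit assignments.
\begin{itemize}
\item Suppose $x\in\cont(u)\setminus\cont(v)$. Set $x\mapsto0$ and all other variables $\mapsto1$. This gives $\varphi(u)=0$ but $\varphi(v)=1$. By symmetry, the contents must therefore agree.
\item Suppose the contents agree and $Z\in\delta(u)\setminus\delta(v)$. Then $Z\subseteq\cont(u)=\cont(v)$. Set the variables of $Z$ to $a$ and all others to $1$. Then $\varphi(u)=a$. On the other hand, $Z\notin\delta(v)$ and $Z\neq\emptyset$ force $\varphi(v)=0$.
\end{itemize}

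There is no serious obstacle here. The only points needing care are bookkeeping ones: occurrences must be counted with multiplicity, which is exactly where $a^2=0$ enters, and one must observe that $\emptyset\notin\delta(u)$, so that the value $1$ and the value $a$ cannot be confused.
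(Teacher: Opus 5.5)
Your proof is correct and complete. The explicit description of the term function gives sufficiency at once: the value is $1$ exactly when $\cont(u)$ avoids $\varphi^{-1}(\{0,a\})$, it is $a$ exactly when $\cont(u)$ avoids $\varphi^{-1}(0)$ and $\varphi^{-1}(a)\cap\cont(u)\in\delta(u)$, and it is $0$ otherwise. Your two test assignments then give necessity, and you correctly use $\varnothing\notin\delta(u)$ to keep the values $1$ and $a$ apart.

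The paper gives no argument of its own here. It imports the criterion from Jackson, Ren and Zhao \cite[Proposition~5.5]{JRZ}. Your direct evaluation therefore supplies a self-contained verification of the cited result, which is the main thing your route buys over the paper's citation.

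Only the forward implication is used later, in the proof of Theorem~\ref{thm:local}. There an identity of $S_7$ is converted into $\cont(u)=\cont(v)$ and $\delta(u)=\delta(v)$. For that direction your two assignments are all that is needed: one sends a single variable to $0$, the other sends a transversal to $a$ and everything else to $1$.
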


\section{Hypergraph semirings}\label{sec:construction}

Flat algebras provide a way to relate universal Horn theories to
equational theories; see Jackson~\cite{Jflat}.
Hypergraphs have been used to study both the membership problem and
axiomatisability for classes of finite algebras;
see Kun and V\'ertesi~\cite{KV}, Ham and Jackson~\cite{HJ}, and,
for ai-semirings, Jackson, Ren and Zhao~\cite{JRZ}.
In the construction below, both the size of an edge and the permitted
size of an intersection are parameters. Their relation will allow us
to control subsemirings generated by a prescribed number of elements.

Let $d\ge1$ and $k\ge2$ be integers such that
\begin{equation}\label{eq:degreebound}
 k>2(d-1).
\end{equation}
Let $\HH=(V,\mathcal E)$ be a finite, nonempty $k$-uniform hypergraph,
with $V=\bigcup\mathcal E$, and suppose that
\begin{equation}\label{eq:intersection}
 |E\cap F|<d\qquad(E,F\in\mathcal E,\ E\ne F).
\end{equation}
We call the members of $\mathcal E$ blocks.

The nonzero elements of a semigroup $F(\HH)$ are the nonempty subsets
of blocks. Their product is their union if they are disjoint and their
union lies in a block; otherwise it is $0$. The element $0$ is absorbing.
This operation is associative: a product of several nonzero factors is
their union precisely when the factors are pairwise disjoint and their
union lies in a block. If either condition fails for a partial product,
it also fails after further factors are adjoined.

In this semigroup, identify all full blocks. Also, for each nonempty
$R$ contained in a block with $|R|\le d-1$, identify all the sets
$E\setminus R$ with $E\supseteq R$. The resulting nonzero elements are
denoted by
\[
\begin{array}{ll}
 a_U,&\varnothing\ne U\subseteq E\in\mathcal E,\quad |U|\le k-d,\\[2pt]
 b_R,&\varnothing\ne R\subseteq E\in\mathcal E,\quad |R|\le d-1,\\[2pt]
 p,&\text{the common class of the blocks.}
\end{array}
\]
Assign these elements degrees $|U|$, $k-|R|$, and $k$, respectively,
and write $b_{\varnothing}=p$ where convenient. The degree ranges of
the first two types are disjoint. In particular, when $d=1$, the blocks
are pairwise disjoint and there are no elements $b_R$ with $R\ne\varnothing$.
After taking this multiplicative quotient, define addition by
\begin{equation}\label{eq:flataddition}
 x+x=x,\qquad x+y=0\quad(x\ne y).
\end{equation}
We denote the resulting algebra by $B(\HH)$.

Recall that a semigroup with zero is \emph{$0$-cancellative} if
$xy=xz\ne0$ implies $y=z$, and $yx=zx\ne0$ implies $y=z$.

\begin{lemma}\label{lem:semiring}
The stated identifications form a semigroup congruence. The quotient is
$0$-cancellative, and $B(\HH)$ is a finite commutative ai-semiring.
\end{lemma}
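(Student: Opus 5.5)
The plan is to describe the equivalence classes explicitly, check compatibility with multiplication by a single nonzero factor, deduce $0$-cancellativity from a unique-decomposition argument, and then check that flat addition distributes.

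\textbf{Step 1: the relation $\sim$ and uniqueness of the removed set.} Let $\sim$ be the equivalence relation on $F(\HH)$ generated by the stated identifications. The basic observation uses \eqref{eq:degreebound} in the form $k\ge 2d-1$. Take a nonzero set $X$ with $|X|\ge k-d+1\ge d$. By \eqref{eq:intersection}, $X$ lies in at most one block, because a set contained in two distinct blocks has fewer than $d$ elements. Hence if $X=E\setminus R$ with $|R|\le d-1$, both $E$ and $R$ are determined by $X$.

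It follows that the classes of $\sim$ are exactly the following, and that they are pairwise disjoint:
\begin{itemize}
\item singletons $\{U\}$ with $|U|\le k-d$;
\item for each nonempty $R$ with $|R|\le d-1$ lying in some block, the set $\{E\setminus R: E\supseteq R\}$;
\item the set of all blocks;
\item $\{0\}$.
\end{itemize}
In particular, transitivity needs no further argument.

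\textbf{Step 2: compatibility with multiplication.} Commutativity of $F(\HH)$ is clear, since the product is a union. So it suffices to show that $X\sim Y$ implies $XZ\sim YZ$ for every nonzero $Z$.

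\begin{itemize}
\item If $X$ and $Y$ are blocks, then $XZ=YZ=0$.
\item Suppose $X=E\setminus R$ and $Y=E'\setminus R$. Since $|X|\ge d$, the only block that can contain $X\cup Z$ is $E$. Hence $XZ\ne0$ if and only if $Z\subseteq R$, and the same condition governs $YZ$.
 \begin{itemize}
 \item If $Z=R$, then both products are full blocks.
 \item If $Z\subsetneq R$, put $R'=R\setminus Z\ne\varnothing$. Then $XZ=E\setminus R'$ and $YZ=E'\setminus R'$, and these are identified.
 \end{itemize}
\end{itemize}

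\textbf{Step 3: $0$-cancellativity.} This is the step needing the most care. By commutativity it suffices to show that $xy=xz\ne0$ implies $y=z$. Choose representatives $X,Y,Z$ with $X\cup Y$ and $X\cup Z$ disjoint unions lying in blocks $E$ and $F$. We split by the type of the common product.

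\begin{itemize}
\item \emph{Product of type $a_W$.} The class is the single set $W$, so $X\cup Y=W=X\cup Z$. Disjointness gives $Y=W\setminus X=Z$.
\item \emph{Product of type $b_{R'}$ or $p$.} Write $X\cup Y=E\setminus R'$ and $X\cup Z=F\setminus R'$, with $R'$ possibly empty in the case of $p$. We split by the type of $x$.
 \begin{itemize}
 \item \emph{$x=b_R$.} Take $X=E\setminus R$. Then $Y=R\setminus R'$. Likewise, with the representative $F\setminus R$ of $x$, we get $Z=R\setminus R'$. So $y=z$.
 \item \emph{$x=a_U$.} Here $X=U$ is unique, so $Y=E\setminus(R'\cup U)$ and $Z=F\setminus(R'\cup U)$.
 \begin{itemize}
 \item If $|R'\cup U|\le d-1$, then $Y$ and $Z$ are identified, both being the element $b_{R'\cup U}$.
 \item Otherwise $|R'\cup U|\ge d$. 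This set lies in both $E$ and $F$, so $E=F$ by \eqref{eq:intersection}, and hence $Y=Z$.
 \end{itemize}
 \end{itemize}
\end{itemize}
The main obstacle is keeping this case analysis exhaustive. It relies twice on the inequality $k\ge 2d-1$: once in Step 1 and once to pin down blocks in the last subcase.

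\textbf{Step 4: the ai-semiring axioms.} Flat addition \eqref{eq:flataddition} is a semilattice operation with absorbing element $0$, and the quotient semigroup is commutative and finite. It remains to check $x(y+z)=xy+xz$.
\begin{itemize}
\item If $y=z$, this is trivial.
\item If $y\ne z$, the left side is $x\cdot0=0$. On the right, either $xy=0$ or $xz=0$, or else $xy\ne xz$ by $0$-cancellativity. In every case $xy+xz=0$.
\end{itemize}
The other distributive law follows by commutativity.
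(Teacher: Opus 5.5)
Your proof is correct and follows the paper's argument: uniqueness of the host block for sets of size at least $k-d+1$ gives disjoint classes, compatibility comes from $XZ\ne0\iff Z\subseteq R$, $0$-cancellativity is a case check recovering the right factor, and flat addition distributes by $0$-cancellativity. The only difference is organizational. You split the cancellation cases by the type of the product and read off the right factor as a set difference of representatives, splitting on $|R'\cup U|$ rather than first on $|U|$, which makes the paper's degree argument unnecessary; the case $x=p$ that you omit is vacuous, since blocks annihilate every nonzero element.
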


\begin{proof}
Any set of size at least $k-d+1$ that is contained in a block is
contained in exactly one block, since $k-d+1>d-1$.
Thus a subset in one of the prescribed complement classes has a
unique host block and a unique complement label. In particular, two
such classes cannot overlap unless their labels agree. The full-block
class has degree $k$ and the ordinary singleton classes have degree
at most $k-d$, so these are disjoint from the complement classes as
well. The zero class is the singleton $\{0\}$.

If a product
$(E\setminus R)U$ is nonzero in $F(\HH)$, its containing block must be
$E$. Hence $U\subseteq R$, and its class is $b_{R\setminus U}$.
Conversely, this product is nonzero whenever $U\subseteq R$.
The output class is therefore independent of the representative
$E\setminus R$. Full blocks annihilate every nonempty factor.
These observations prove compatibility with multiplication by an
arbitrary element of $F(\HH)$ on either side, and hence prove that
the stated classes form a semigroup congruence. In particular, a
product of ordinary representatives is zero in $F(\HH)$ if and only
if its image is zero in the quotient. We have formed a quotient of
the multiplicative semigroup; flat addition is imposed on that
quotient afterwards.

For reference, the quotient multiplication is as follows. The product
$a_Ua_W$ is zero unless $U,W$ are disjoint and $L=U\cup W$ lies in a
block. Otherwise it is $a_L$ if $|L|\le k-d$, $b_{E\setminus L}$ if
$k-d<|L|<k$, and $p$ if $|L|=k$. In the middle case, $E$ is unique.
Moreover,
\begin{equation}\label{eq:complementproduct}
 b_Ra_U=
 \begin{cases}
 b_{R\setminus U},&U\subseteq R,\\
 0,&U\not\subseteq R.
 \end{cases}
\end{equation}
Two elements of type $b_R$ multiply to zero, because their degrees sum
to more than $k$, and $p$ annihilates every element. Degrees add whenever
a product is nonzero.

To prove $0$-cancellation, fix a left factor and a nonzero output. We
show that there is at most one right factor.

If the left factor is $a_U$ with $|U|\ge d$, its containing block $E$
is unique. The right factor must be ordinary, since a factor $b_R$
would make the total degree exceed $k$. Outputs $a_L$, $b_R$, and $p$
respectively determine the right supports
\[
 L\setminus U,\qquad E\setminus(R\cup U),\qquad E\setminus U.
\]
This proves uniqueness whenever such a product exists.

Suppose $1\le |U|\le d-1$. An output $a_L$ determines
$a_{L\setminus U}$, and output $p$ determines $b_U$. For output $b_R$,
one must have $U\cap R=\varnothing$. If $|U\cup R|\ge d$, there is at
most one block containing $U\cup R$, and it determines the right
factor $a_{E\setminus(U\cup R)}$. If $|U\cup R|<d$, the right factor
is necessarily $b_{U\cup R}$. Indeed, all possible representing
complements have this label, whereas an ordinary right factor would
have degree $k-|U\cup R|>k-d$.

Finally, a left factor $b_R$ and output $b_Q$ determine
$a_{R\setminus Q}$; output $p$ determines $a_R$. There are no nonzero
products with left factor $p$. Empty or inadmissible supports in these
formulas simply give no product. We have proved
$xy=xz\ne0\Rightarrow y=z$.

Flat addition is associative, commutative, and idempotent. If $y\ne z$,
then $xy,xz$ cannot be equal and nonzero, so
$xy+xz=0=x(y+z)$. If $y=z$, distributivity follows from idempotence.
Commutativity of multiplication gives the other distributive law.
\end{proof}

Every element of $B(\HH)$ has square zero. Also, any product of nonzero
elements whose degrees sum to more than $k$ is zero.

\subsection{Quotients determined by divisibility}

For $r\ne0$, an element $x$ divides $r$ if $xy=r$ for some
$y\in B(\HH)^1$, where the superscript allows an identity to be
adjoined for this definition. The nondivisors form a multiplicative
ideal: if $xz$ divides $r$, then $x$ divides $r$. They contain $0$
and are closed under addition with any element, by flatness.
Consequently they may be collapsed to zero to give a semiring Rees
quotient. Every retained nonzero element has a singleton preimage.

\begin{lemma}\label{lem:link}
Let $r=b_R$, with $1\le s=|R|\le d-1$. The Rees quotient obtained by
collapsing its nondivisors is isomorphic to $B(\HH_R)$, where
\[
 \HH_R=\{E\setminus R:E\in\mathcal E,\ R\subseteq E\}.
\]
The vertex set is the union of these blocks, the parameters are
$k'=k-s$ and $d'=d-s$, and $b_R$ corresponds to the common block product.
\end{lemma}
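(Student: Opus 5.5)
I would prove Lemma~\ref{lem:link} by writing down the divisors of $r=b_R$ explicitly and matching them with the elements of $B(\HH_R)$ through a degree-preserving bijection. Since $\HH_R$ is $(k-s)$-uniform, I first check its parameters. For distinct $E,F\supseteq R$ the traces satisfy $|(E\setminus R)\cap(F\setminus R)|=|E\cap F|-s<d-s=d'$. The inequality $k'>2(d'-1)$ follows from $k-s>2(d-s-1)$, which holds because $k>2(d-1)$ and $s\ge1$. Hence $B(\HH_R)$ is defined and Lemma~\ref{lem:semiring} applies to it. Note that $k'-d'=k-d$, so the ranges of the ordinary labels agree.

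The next step is to list the nonzero divisors of $b_R$ in $B(\HH)$. By \eqref{eq:complementproduct} and the multiplication table in the proof of Lemma~\ref{lem:semiring}, an element $x$ divides $b_R$ exactly when $x=b_R$, or when $x=a_U$ with $U$ disjoint from $R$ and $U\cup R$ lying in a block, or when $x=b_Q$ with $Q\supseteq R$ and $|Q|\le d-1$. One also has to allow $x$ to be a product landing on a complement class whose support sits in a block containing $R$. The cleanest way to organise this is to work in $F(\HH)$. Divisors of the class $b_R$ are the classes of sets $X$ with $X\cap R=\varnothing$ and $X\subseteq E\setminus R$ for some block $E\supseteq R$, together with the identity. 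The cofactor is $E\setminus(R\cup X)$, up to the identification.

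Define $\varphi$ on these divisors by $\varphi(a_U)=a'_U$ and $\varphi(b_Q)=b'_{Q\setminus R}$, where primes denote elements of $B(\HH_R)$, and set $\varphi(b_R)=p'$. Nondivisors, including $0$, go to $0$. The key checks are the following.
\begin{enumerate}[$(i)$]
\item $\varphi$ is well defined and bijective. For $|U|\le k-d=k'-d'$ the ordinary labels correspond directly. For complement labels, $b_Q$ with $R\subseteq Q$ and $|Q|\le d-1$ corresponds to $b'_{Q\setminus R}$ with $|Q\setminus R|\le d'-1$. Here the identification of all $E\setminus Q$ with $E\supseteq Q$ matches the identification of all $(E\setminus R)\setminus(Q\setminus R)$ in $\HH_R$.
\item Each ordinary $a_U$ dividing $b_R$ has a support lying in a block of $\HH_R$, and conversely.
\item Multiplication is preserved. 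Degrees shift by $s$ in the complement type, so the rule in \eqref{eq:complementproduct} and the rule for $a_Ua_W$ transfer verbatim. A product of divisors is nonzero in the Rees quotient iff it divides $b_R$, iff the union lies in some $E\setminus R$, iff it is nonzero in $B(\HH_R)$.
\end{enumerate}
Addition is flat on both sides and $\varphi$ is a bijection fixing $0$, so it is automatically additive.

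I expect the main obstacle to be check $(i)$ for the boundary cases. An element $a_U$ with $U\cap R=\varnothing$ may have $|U\cup R|\le d-1$ and lie in several blocks. In that case I must confirm that dividing $b_R$ only requires some block containing $U\cup R$. The other delicate point is a label $b_Q$ whose class in $B(\HH)$ contains sets $E\setminus Q$ for blocks $E\supseteq Q$; I need these blocks to be exactly the blocks of $\HH_R$ containing $Q\setminus R$. Both points follow because blocks of $\HH_R$ are precisely the traces of blocks containing $R$, with no extra coincidences, and the intersection bound makes supports of size at least $d'$ have unique host blocks on both sides.
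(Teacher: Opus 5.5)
Your proposal is correct and takes essentially the same route as the paper. Both use the same description of the divisors of $b_R$ and the same bijection $a_U\mapsto a'_U$, $b_Q\mapsto b'_{Q\setminus R}$, $b_R\mapsto p'$, and both check the parameters and additivity in the same way. Your comparison of representatives in $F(\HH)$, using matching identifications on the two sides, replaces the paper's case-by-case check of \eqref{eq:complementproduct} and of the $a_Ua_W$ outputs, where $(E\setminus L)\setminus R=(E\setminus R)\setminus L$ is the key equality.

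Two small points. You omit the checks $d'=d-s\ge1$ and $k'=k-s\ge k-d+1\ge d\ge2$, which are needed for the construction to apply but are immediate. Also, $\varphi$ preserves degrees; what shifts by $s$ is the size of the complement labels, not the degree.
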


\begin{proof}
An ordinary element $a_U$ divides $b_R$ precisely when $U\cap R$ is
empty and $U\cup R$ lies in a block. Necessity follows from the product
formulas above. For sufficiency, choose a block $E\supseteq U\cup R$
and multiply $a_U$ by the class of $E\setminus(U\cup R)$. This
complement is nonempty, since $|U|+|R|\le k-1$.
These divisors are exactly the ordinary elements of $B(\HH_R)$,
because $k'-d'=k-d$.

The element $b_Q$ divides $b_R$ precisely when $R\subseteq Q$.
For $Q\ne R$ it corresponds to the link element $b_{Q\setminus R}$;
the element $b_R$ itself corresponds to the common block product.
The element $p$ does not divide $b_R$. This gives a bijection between
the nonzero elements of the two algebras.

Formula~\eqref{eq:complementproduct} is preserved by this bijection:
for a retained $b_Q$ and an ordinary divisor $a_U$, the product
$b_Qa_U=b_{Q\setminus U}$ survives precisely when
$U\subseteq Q$ and $R\subseteq Q\setminus U$. Since
$U\cap R=\varnothing$, this is equivalent to
$U\subseteq Q\setminus R$, the corresponding condition in the link.

Consider next two retained ordinary elements $a_U,a_W$. If their
supports overlap, their product is zero in both constructions.
Suppose that the supports are disjoint and put $L=U\cup W$.
When $|L|\le k-d$, their product survives precisely when $L$ is a
divisor support, that is, when $L\cap R=\varnothing$ and $L\cup R$
lies in a block. When $|L|>k-d$, a nonzero original product has a
unique containing block $E$ and is labelled by $E\setminus L$,
with the empty label allowed for $p$. It survives precisely when
$R\subseteq E\setminus L$. This is again equivalent to saying that
$L$ is disjoint from $R$ and is contained in a block containing $R$.
In the surviving case its link label is
$(E\setminus L)\setminus R=(E\setminus R)\setminus L$, as required;
the label is empty precisely when $L=E\setminus R$.
These are exactly the nonzero-product conditions and outputs in
$B(\HH_R)$. Products of two large elements are zero in both algebras.
The bijection therefore preserves multiplication. It preserves flat
addition as well.

Distinct link blocks intersect in fewer than $d-s$ points. Finally,
\[
 k'-2(d'-1)=k-2(d-1)+s>0.
\]
Here $d'\ge1$ and $k'\ge k-d+1\ge d\ge2$, so the construction applies,
including the case $d'=1$.
\end{proof}

\section{Finitely generated subsemirings}\label{sec:local}

\begin{theorem}\label{thm:local}
Let $n\ge1$. Under the assumptions of the preceding section, if
\begin{equation}\label{eq:localbound}
 k>n(d-1),
\end{equation}
then every subsemiring of $B(\HH)$ generated by at most $n$ elements
belongs to $\mathcal N$.
\end{theorem}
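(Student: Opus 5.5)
The plan is to check both defining conditions of $\mathcal N$ for a subsemiring $C$ generated by at most $n$ elements $g_1,\dots,g_n$. The identity $x^2y\approx x^2$ is immediate, because every element of $B(\HH)$ squares to $0$ and $0$ is absorbing. The real work is to show $C\in\Var(S_7)$.

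\emph{Step 1: reduce to principal Rees quotients.} Since addition is flat, every nonzero element of $C$ is a product of distinct generators; repeated factors give $0$ because squares vanish. For each nonzero $r\in C$, collapse the nondivisors of $r$ in $C$ to get a flat Rees quotient $C_r$, exactly as in the paragraph preceding Lemma~\ref{lem:link}. Two distinct elements $x\ne y$ of $C$ are separated in $C_r$ for $r=x$ (or $r=y$ if $x=0$). Hence $C$ is a subdirect product of the $C_r$, and it suffices to prove $C_r\in\Var(S_7)$ for each $r$. By Lemma~\ref{lem:link}, when $r=b_R$ the divisors of $r$ in $B(\HH)$ form $B(\HH_R)$ with $r$ as the block product. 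The new parameters satisfy $k'-n(d'-1)=k-n(d-1)+(n-1)s>0$, so \eqref{eq:localbound} is inherited. This leaves two cases: $r=a_L$, and $r=p$ in a hypergraph satisfying \eqref{eq:localbound}.

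\emph{Step 2: the case $r=a_L$.} The divisors of $a_L$ are the $a_U$ with $U\subseteq L$, and their products are disjoint unions. So $C_r$ embeds in $S_c$ of the word with letters $L$, which lies in $\mathcal N\subseteq\Var(S_7)$ by~\cite{GJRZ}.

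\emph{Step 3: the case $r=p$ (main obstacle).} Here I would use \eqref{eq:localbound} in the following way. Any factorization of $p$ into distinct generators has at most $n$ factors and total degree $k>n(d-1)$. Hence it contains a factor of degree at least $d$: either an $a_U$ with $|U|\ge d$, or a $b_R$. Such a factor has a unique host block, by the argument in Lemma~\ref{lem:semiring}. Since $C_r$ is $0$-cancellative (inherited from Lemma~\ref{lem:semiring}), each divisor $x$ of $p$ has a unique cofactor $x^*$ with $xx^*=p$. I would then aim to show that $C_r$ is isomorphic to a subsemiring of some $S_c(a_1\cdots a_m)$, by sending each divisor to its support inside the relevant block. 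To do this I would fix one such large generator $g$ together with its host block $E$, and show that every nonzero product containing $g$ realises a subset of $E$, so that the products are disjoint unions. Products not involving $g$ lie below the cofactor $g^*$, and I would handle them by induction on the number of generators.

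If this embedding fails because two genuinely different factorizations pass through different blocks, the fallback is to verify the identities directly via Proposition~\ref{prop:criterion}. Let $u\approx v$ be an identity of $S_7$, fix an assignment, and suppose $u$ evaluates to $p$. Then every word of $u$ evaluates to $p$. I would show that the set $Z$ of variables whose value is a chosen large generator, or a factor of it within the uniquely determined host block, is a transversal of $u$. Then $Z\in\delta(u)=\delta(v)$ and $\cont(u)=\cont(v)$ should force every word of $v$ to evaluate to the same element $p$, and symmetrically with $u$ and $v$ exchanged. Controlling this uniqueness of the host block across all words is the step I expect to be delicate, and it is precisely where the hypothesis $k>n(d-1)$, rather than merely $k>2(d-1)$, must be used.
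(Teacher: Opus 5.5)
Your Steps 1 and 2 are sound and follow the paper's case split ($r=a_W$; $r=b_R$ via Lemma~\ref{lem:link}; $r=p$). One correction: you collapse nondivisors in $C$ rather than in $B(\HH)$, so $C_{a_L}$ is a homomorphic image of a subsemiring of $S_c(L)$, not a subsemiring of it. That still places it in $\mathcal N$.

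The genuine gap is Step~3, which carries the whole content of the theorem (the paper's Lemma~\ref{lem:topfibre}). Two of its claims fail.

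First, a factor of degree at least $d$ need not have a unique host block. By definition, $b_R$ is the common class of $E\setminus R$ for all blocks $E\supseteq R$.

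Second, and more seriously, $C_p$ need not embed in any $S_c(a_1\cdots a_m)$. Take $d=2$, $k=7$, $n=6$, and blocks $E_1=\{u,v\}\cup P_1$, $E_2=\{v,w\}\cup P_2$, $E_3=\{u,w\}\cup P_3$, where $P_1,P_2,P_3$ are pairwise disjoint $5$-sets avoiding $u,v,w$. Use the generators $x=a_{\{u\}}$, $y=a_{\{v\}}$, $z=a_{\{w\}}$, $a_{P_1}$, $a_{P_2}$, $a_{P_3}$.
\begin{itemize}
\item Since $xya_{P_1}=yza_{P_2}=xza_{P_3}=p$, the products $xy,yz,xz$ divide $p$ in $C$ and are nonzero in $C_p$.
\item But $xyz=0$, because no block contains $\{u,v,w\}$.
\item In a subsemiring of $S_c(a_1\cdots a_m)$, nonzero pairwise products force pairwise disjoint supports, and hence a nonzero triple product.
\end{itemize}
So no injective homomorphism exists. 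Factorizations through different blocks are the generic situation, not an exceptional one.

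The fallback does not close the gap. A single $Z\in\delta(v)$ only says that each word of $v$ has exactly one occurrence from $Z$, which cannot force the value $p$. The missing idea is to use a whole family of transversals and compare them.

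In the paper, $D_0$ is the union of the supports of the small ordinary generators ($|U|\le d-1$). The hypothesis $k>n(d-1)$ is used precisely to get $|D_0|<k$, so every active block $E$ has points outside $D_0$. Two kinds of transversals of $u$ are then available:
\begin{itemize}
\item For each choice of $c_E\in E\setminus D_0$: select $p$, every $b_R$, and each nonsmall $a_U$ with $c_{E(U)}\in U$.
\item For each $x\in D_0$: still select $p$, but only the $b_R$ with $x\notin R$ and the small $a_U$ containing $x$, and put $c_E=x$ on the active blocks through $x$.
\end{itemize}
Since $\delta(u)=\delta(v)$, each word $w$ of $v$ meets all of these exactly once. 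This gives linear equations in the occurrence counts of $w$:
\begin{itemize}
\item the number of occurrences of nonsmall generators of $E$ covering a point is constant, say $d_E$, on $E\setminus D_0$;
\item $m_p+\sum_Rm_{b_R}+\sum_Ed_E=1$;
\item the total coverage of each $x\in D_0$ by ordinary factors equals $\sum_{R\ni x}m_{b_R}+\sum_{E\ni x}d_E$.
\end{itemize}
A three-case analysis then shows that the supports in $w$ either partition a block, or partition $R$ alongside $b_R$, so $w=p$. This uses $|E_0\cap F|<d$ to exclude nonsmall factors from a second block.

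You also need the preliminary step of rewriting the assigned values as polynomials in generator symbols before applying Proposition~\ref{prop:criterion}. Without this mechanism, or an equivalent one, your proposal does not establish $C_p\in\Var(S_7)$.
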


We first consider polynomials all of whose monomials have value $p$.

\begin{lemma}\label{lem:topfibre}
Assign distinct variables to at most $n$ distinct nonzero elements of
$B(\HH)$, under the hypotheses of Theorem~\ref{thm:local}. Suppose
every monomial of $u$ has value $p$. Let $w$ be a word using only
variables from $\cont(u)$. If every $Z\in\delta(u)$ meets $w$ in
exactly one occurrence, then $w$ has value $p$.
\end{lemma}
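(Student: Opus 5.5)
The plan is to turn the algebraic hypothesis into a combinatorial statement about supports, build enough transversals of $u$ from vertices of the hypergraph, and then read off from those transversals that $w$ reassembles a full block. Write $x_1,\dots,x_m$ ($m\le n$) for the variables and $g_i$ for their distinct nonzero values. Each $g_i$ has a support: the set $U$ if $g_i=a_U$, and the pair ``host block, complement label $R$'' if $g_i=b_R$. Since every element of $B(\HH)$ squares to zero, a monomial of $u$ with value $p$ contains each variable at most once. By the degree rule, its factors have degrees summing to exactly $k$ and contain at most one factor of type $b_R$. Unwinding the multiplication table, each monomial $M$ of $u$ determines a block $E_M$, and the supports of its factors partition $E_M$. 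Here a factor $b_R$ is read as the representative $E_M\setminus R$, and the remaining factors partition $R$.

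Next I show that the blocks $E_M$ are tied together tightly enough to produce transversals. A monomial has at most $n$ factors and total degree $k>n(d-1)$, so some factor has degree at least $d$. Such a factor is either an $a_U$ with $|U|\ge d$ or a $b_R$, and in both cases it lies in a unique block. For a vertex $v$, define $Z_v$ to be the set of variables whose support, computed relative to the relevant block, contains $v$. Whenever $v\in E_M$, the partition property gives $|Z_v\cap M|=1$ with multiplicity.

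The aim is to choose a family of vertices $v$, or combinations of the sets $Z_v$ along with the set of $b$-type variables, so that each chosen set meets every monomial of $u$ exactly once and so lies in $\delta(u)$. I also want the family to be rich enough to separate all vertices of each block in use. If all the $E_M$ coincide, every $v\in E_M$ works directly. In general I expect to use the bound $|E\cap F|<d$, together with $k>n(d-1)$, to show one of two things: either the monomials all live on a single block, or the differing parts are carried entirely by a shared $b_R$ factor. In the second case the transversals are the sets $Z_v$ for $v\in R$ together with $\{x_i: g_i \text{ of type } b\}$. This reduction is the main obstacle. The natural first attempt is a pigeonhole count: the at most $n$ supports covering $E_M$ can each meet a different block in fewer than $d$ points, so they cannot cover a second block of size $k>n(d-1)$. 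Consequently two monomials on distinct blocks must share their large factors, and that forces the common $b_R$ picture.

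Finally, let $w$ meet every $Z\in\delta(u)$ exactly once. Take the single block $E$ from the previous step, or, in the $b_R$ case, the set $R$ together with the $b$-variable transversal. Then each $v\in E$ (respectively each $v\in R$, plus exactly one $b$-type factor) is covered exactly once by the supports of the letters of $w$. Because the variables are drawn only from $\cont(u)$, all these supports lie in the host block. Hence the supports of $w$ are pairwise disjoint, their union is the full block, and no variable repeats. The product formulas then give that $w$ has value $p$.
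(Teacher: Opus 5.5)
Your reduction step fails, and the setup behind it is already off. An element $b_R$ is the common class of all $E\setminus R$ with $E\supseteq R$, so it has no host block, and neither does $p$. A monomial $b_Ra_{U_1}\cdots a_{U_j}$ with the $U_i$ partitioning $R$, or the one-letter monomial naming $p$, therefore does not determine a block $E_M$.

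More seriously, nothing forces different monomials of $u$ to interact at all. For instance, take $u=x_1x_2+x_3x_4$, where $x_1x_2$ partitions a block $E$ and $x_3x_4$ partitions a different block $F$, with no variable in common. Or take $u=x_1+x_2x_3$ with $x_1$ naming $p$. Or take $u=x_1x_2+x_3x_4$ with values $a_R,b_R,a_{R'},b_{R'}$ and $R\ne R'$. Your pigeonhole count only shows that the supports of one monomial cannot cover a second block. It does not imply that monomials on distinct blocks share their large factors. So neither branch of your dichotomy need hold, and your final step (``all these supports lie in the host block'') has nothing to stand on. In general $\cont(u)$ contains generators from many blocks, possibly together with $p$ and several $b_R$, and the real work is to show that $w$ cannot mix them.

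The missing idea is a family of transversals that treats all blocks simultaneously.

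\begin{enumerate}[$(1)$]
\item Let $D_0$ be the union of the supports of the small generators $a_U$ with $|U|\le d-1$. Then $|D_0|\le n(d-1)<k$. So every block partitioned by ordinary factors of a monomial (an active block) contains a nonsmall factor, and nonsmall supports have unique blocks.
\item For the first family, choose an anchor $c_E\in E\setminus D_0$ independently in each active block. Select all $p$- and $b_R$-variables, together with the nonsmall $a_U$ containing their block's anchor. This gives a member of $\delta(u)$.
\item For the second family, take one selection for each $x\in D_0$. Select $p$, the $b_R$ with $x\notin R$, and the small $a_U\ni x$. Anchor every active block containing $x$ at $x$, and anchor the others outside $D_0$.
\item Vary one anchor at a time. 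This shows that the number of occurrences in $w$ of nonsmall generators of $E$ covering a given point of $E\setminus D_0$ is a constant $d_E$.
\item Consequently $m_p+\sum_R m_{b_R}+\sum_E d_E=1$, and the second family gives point equations on $D_0$.
\item A case analysis finishes the proof. In the case $d_{E_0}=1$, the bound $|E_0\cap F|<d$ is what excludes nonsmall factors of $w$ belonging to other blocks.
\end{enumerate}

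Your argument essentially handles the case where every monomial partitions one and the same block. It does not extend to the general case without this simultaneous construction and counting.
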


\begin{proof}
Retain only generators whose variables occur in $u$.

\smallskip\noindent
\textbf{Step 1.} The possible monomials and their transversals.
Call an ordinary generator $a_U$ small if $|U|\le d-1$; otherwise
$|U|\ge d$, and it has a unique containing block $E(U)$.
Let $D_0$ be the union of the small supports. Then
$|D_0|\le n(d-1)<k$.

For a word consisting of ordinary factors, take their actual supports
as representatives in $F(\HH)$. The quotient map is a semigroup
homomorphism and its zero class is a singleton. The ordinary word
therefore has nonzero value in the quotient precisely when its
representatives have nonzero product in $F(\HH)$. Since the preimage
of $p$ consists exactly of the full blocks, such a word has value $p$
precisely when its supports are pairwise disjoint and their union
is a block. A nonzero word involving $b_R$ has only
ordinary remaining factors, and~\eqref{eq:complementproduct} shows that
their supports must partition $R$ in order to give $p$.
A word involving the factor $p$ has value $p$ only if it has no other
factor. We have therefore obtained exactly three possibilities:
$p$ alone, a factor $b_R$ followed by small factors whose supports
partition $R$, or ordinary factors whose supports partition a block
$E$. In the second case $R\subseteq D_0$. In the third case at least
one factor is not small, since $E\not\subseteq D_0$.
Call the blocks occurring in the third case active. Every nonsmall
ordinary generator occurring in $u$ belongs to a unique active block.

Choose independently $c_E\in E\setminus D_0$ for each active block.
Select the variables corresponding to $p$ and all $b_R$, select no
small variable, and select a nonsmall ordinary variable $a_U$ exactly
when $c_{E(U)}\in U$. Each monomial of $u$ has exactly one selected
occurrence: in the first two forms it is the distinguished factor;
in the third it is the unique part containing $c_E$. Thus these
selections belong to $\delta(u)$.

There is a second selection for each $x\in D_0$. Select $p$, select
$b_R$ when $x\notin R$, and select a small $a_U$ when $x\in U$.
For each active block containing $x$, set $c_E=x$; for every other
active block choose $c_E\in E\setminus D_0$. Select the nonsmall
ordinary variables by membership of $c_E$ in their support.
For a word involving $b_R$, either that factor is selected or exactly
one part of its partition of $R$ is selected. For an ordinary block
partition, exactly one part contains $c_E$; if $x\notin E$, none of
its small parts is selected. Hence these selections also lie in
$\delta(u)$.

\smallskip\noindent
\textbf{Step 2.} Two counting equations.
Let $m_g$ be the number of occurrences in $w$ of the variable naming
generator $g$; write $m_p=0$ if $p$ is absent. All sums over $R$ below
range over complement generators occurring in
$u$, and all sums over $E$ range over active blocks. For an active block $E$,
put
\[
 f_E(x)=\sum_{\substack{g=a_U,\ |U|\ge d\\E(U)=E,\ x\in U}}m_g.
\]
The first selections give
\[
 m_p+\sum_Rm_{b_R}+\sum_Ef_E(c_E)=1.
\]
Fix an active block $E$ and hold the choices for all other active
blocks fixed. Each point of $E\setminus D_0$ can be used as $c_E$,
and each resulting selection is a transversal of $u$. Comparing the
displayed equality for two such choices shows that $f_E$ takes the
same value at those points. Thus $f_E$ is constant on
$E\setminus D_0$. Denote its constant value by $d_E\in\NN$. The
argument applies to every active block separately, giving
\begin{equation}\label{eq:base}
 m_p+\sum_Rm_{b_R}+\sum_Ed_E=1.
\end{equation}
For $x\in D_0$, let
\[
 C_w(x)=\sum_{\substack{g=a_U\\x\in U}}m_g.
\]
The second selection at $x$ gives
\[
 m_p+\sum_{R\not\ni x}m_{b_R}+C_w(x)
       +\sum_{E\not\ni x}d_E=1.
\]
Indeed, all ordinary factors whose supports contain $x$ are selected;
each active block missing $x$ contributes its constant $d_E$ instead.
Subtracting~\eqref{eq:base} yields
\begin{equation}\label{eq:point}
 C_w(x)=\sum_{R\ni x}m_{b_R}+\sum_{E\ni x}d_E
 \qquad(x\in D_0).
\end{equation}

\smallskip\noindent
\textbf{Step 3.} Determine the value of $w$.
The nonnegative integers in~\eqref{eq:base} sum to one, so exactly
one of the following cases occurs.

If $m_p=1$, all $m_{b_R}$ and $d_E$ vanish. Every nonsmall ordinary
support occurring in $w$ must lie in $D_0$, since its block contributes
zero outside $D_0$. But~\eqref{eq:point} gives $C_w(x)=0$ throughout
$D_0$, excluding all ordinary factors. Thus $w=p$.

If $m_{b_R}=1$ for some $R$, all other terms in~\eqref{eq:base} vanish.
Again every ordinary support occurring in $w$ lies in $D_0$, and
\eqref{eq:point} now gives $C_w(x)=\1_R(x)$ there. Therefore the
ordinary supports, counted with multiplicity, partition $R$. They
are all small because $|R|\le d-1$. Together with $b_R$ they give
value $p$.

Finally, suppose $d_{E_0}=1$ for an active block $E_0$. There are no
$p$ or $b_R$ factors. If a nonsmall factor belongs to another block
$F$, the equation $d_F=0$ confines its support to $D_0$, while
\eqref{eq:point} confines it to $E_0$. Its support would then have
size at most $|E_0\cap F|<d$, a contradiction. Thus all nonsmall
factors belong to $E_0$. Their supports cover each point of
$E_0\setminus D_0$ exactly once, since $f_{E_0}=1$ there.
Equation~\eqref{eq:point} says that all ordinary supports cover
precisely $E_0\cap D_0$ once inside $D_0$. Hence the supports partition
$E_0$, and $w$ again has value $p$.

All counts include repeated occurrences of variables. The argument
also applies when $D_0$ is empty: then the second family of equations
is empty, and the same three cases give the conclusion.
\end{proof}

\begin{proof}[Proof of Theorem~\ref{thm:local}]
Let $C$ be generated by a set of at most $n$ distinct elements, and
consider an identity of $S_7$ under an assignment into $C$. Express
each assigned element as a polynomial in distinct symbols naming
the generators. Substitution gives an identity $u\approx v$ of
$S_7$, and Proposition~\ref{prop:criterion} gives
\[
 \cont(u)=\cont(v),\qquad \delta(u)=\delta(v).
\]
If both evaluations are zero, they agree. Otherwise interchange the
sides if necessary, and let $u$ have nonzero value $r$. By flatness,
every monomial of $u$ has value $r$. Every generator occurring in $u$
is therefore nonzero; discard the other generators, which also do
not occur in $v$. Each monomial $w$ of $v$ uses only $\cont(u)$ and
is met exactly once by every member of $\delta(u)$.

If $r=p$, Lemma~\ref{lem:topfibre} gives $w=p$.
If $r=a_W$, all occurring generators are ordinary, and every
monomial of $u$ partitions $W$ by their supports. For each $x\in W$,
the generator symbols whose supports contain $x$ form a transversal
of $u$. Therefore the supports in $w$ cover every point of $W$
exactly once. They are all contained in $W$, so they partition $W$
and $w=a_W$.

Suppose $r=b_R$, and put $s=|R|$. Every occurring generator divides
$r$, since it is a factor of a monomial with value $r$. Pass to the
Rees quotient of Lemma~\ref{lem:link}. These generators remain
distinct and nonzero, and $r$ becomes the common block product.
The new parameters satisfy
\begin{align*}
 (k-s)-2(d-s-1)&=k-2(d-1)+s>0,\\
 (k-s)-n(d-s-1)&=k-n(d-1)+(n-1)s>0.
\end{align*}
Lemma~\ref{lem:topfibre} applies there and gives $w=b_R$ in the
quotient. Since a retained nonzero element has a singleton preimage,
$w=b_R$ in $B(\HH)$ as well.

In all cases every monomial of $v$ has value $r$, so $u=v$.
Consequently $C$ satisfies every identity of $S_7$ and belongs to
$\Var(S_7)$ by Birkhoff's theorem. Every element of $B(\HH)$ has
square $0$, and $0$ absorbs multiplication. Thus $C$ also satisfies
$x^2y\approx x^2$, and hence $C\in\mathcal N$. Notice that $0$
belongs to every nonempty subsemiring $C$, since it is the square of
any element of $C$. The generators considered throughout the proof
were arbitrary elements of $B(\HH)$; no assumption that they are
vertex elements has been made.
\end{proof}

\section{Hypergraphs with prescribed extensions}\label{sec:random}

The use of sparse hypergraphs with prescribed global properties goes
back to the constructions of Erd\H{o}s and Hajnal~\cite{EH}.
For the present argument we need an extension property that holds
simultaneously for every colouring and every set of bounded size.
The following probabilistic construction supplies this property while
retaining a bound on intersections.

\begin{lemma}\label{lem:random}
Let $m,D$ be positive integers, let $\varepsilon>0$, and put $d=2D+4$.
For every fixed integer $k>d$ and every sufficiently large integer $N$,
there is a $k$-uniform hypergraph $\HH=(V,\mathcal E)$ with $|V|=N$
having the following properties.
\begin{enumerate}[$(1)$]
\item If $E,F\in\mathcal E$ are distinct, then $|E\cap F|<d$.
\item Let $\kappa:V\to\{1,\ldots,m\}$ be any colouring, let
      $R\subseteq V$ with $|R|\le D$, and let $q\in\NN^m$ satisfy
      $\len q=k-|R|$. If
      \[
       q_i>0\quad\Longrightarrow\quad
       |\kappa^{-1}(i)\setminus R|\ge\varepsilon N,
      \]
      then there is an $E\in\mathcal E$ containing $R$ such that
      \[
       |(E\setminus R)\cap\kappa^{-1}(i)|=q_i
       \qquad(1\le i\le m).
      \]
\end{enumerate}
In particular, $N$ may be chosen sufficiently large in any prescribed
residue class modulo a fixed positive integer.
\end{lemma}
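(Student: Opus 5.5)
The plan is a first-moment construction: take a binomial random $k$-uniform hypergraph on $N$ vertices, show that with high probability it has no pair of blocks meeting in $d$ or more points, and that with high probability every extension in (2) is realised. For large $N$ both hold at once, and since this argument places no restriction on $N$ beyond largeness, $N$ may be taken in any prescribed residue class. Concretely, fix $\delta=\tfrac12$ and include each $k$-subset of $V=\{1,\dots,N\}$ independently with probability
\[
 \rho=N^{-(k-D-1-\delta)}.
\]
Since $k>d=2D+4>D+1+\delta$, we have $\rho\le1$.

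\textbf{Intersections.} Count ordered pairs of distinct $k$-sets sharing at least $d$ points. There are $O(N^{2k-d})$ such pairs, with constants depending only on $k$. Each pair is present with probability $\rho^2$. The expected number of bad pairs is therefore
\[
 O(N^{2k-d}\rho^2)=O\bigl(N^{2k-d-2k+2D+2+2\delta}\bigr)=O(N^{-2+2\delta})=O(N^{-1}).
\]
By Markov's inequality, with probability $1-o(1)$ there are no bad pairs, so (1) holds without any deletion. This avoids an alteration step that could destroy property (2), and it is exactly why $d=2D+4$ is chosen.

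\textbf{Extensions.} Fix a colouring $\kappa$, a set $R$ with $|R|=s\le D$, and a vector $q$ as in (2). The candidate blocks are the sets $E=R\cup T$, where $T$ is chosen from $V\setminus R$ with exactly $q_i$ points of colour $i$. Each nonempty colour class outside $R$ has size at least $\varepsilon N$, and $N$ is large compared with $k$. So the number of candidates is at least $\prod_i\binom{\lceil\varepsilon N\rceil}{q_i}\ge c\,N^{k-s}$, where $c>0$ depends only on $k$ and $\varepsilon$. Distinct candidates are distinct $k$-sets, so they are included independently. The probability that none is included is at most
\[
 (1-\rho)^{cN^{k-s}}\le\exp(-c\rho N^{k-D})=\exp(-cN^{1+\delta}).
\]

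Now take a union bound over all data. The number of colourings is $m^N$, the number of sets $R$ is at most $N^{D+1}$, and the number of vectors $q$ is at most $(k+1)^m$. The total failure probability is at most
\[
 m^N N^{D+1}(k+1)^m e^{-cN^{3/2}}\to0 .
\]
Hence for large $N$ some outcome satisfies both (1) and (2). The vertex-set convention $V=\bigcup\mathcal E$ is also covered: applying (2) with $R=\{v\}$ and a single colour shows that every vertex lies in a block.

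\textbf{Main obstacle.} The delicate point is that (2) must hold simultaneously for all $m^N$ colourings. A per-event failure probability of order $e^{-\Theta(N)}$ would not suffice, so the construction must guarantee superlinearly many expected candidates, namely $\rho N^{k-D}\gg N$. Meanwhile $\rho$ must stay small enough that pairs sharing $d$ points are unlikely. The exponent bookkeeping above shows that the window $N^{-(k-D-1)}\ll\rho\ll N^{-(k-d/2+1)}$ is nonempty exactly because $d>2D+2$. I would check these two exponent inequalities carefully, including the lower bound on the candidate count when some $q_i=0$ or $s=0$.
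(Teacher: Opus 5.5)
Your proof is correct, and it takes a genuinely different route from the paper's.

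\textbf{The paper's route.} The paper chooses the denser probability $\rho=N^{-k+D+3}$. Then every eligible triple has mean at least $cN^3$ extensions. A Chernoff lower-tail bound, together with the union bound over all $m^N$ colourings, shows that every triple has at least $(c/2)N^3$ extensions. At this density the expected number of bad pairs is $O(N^2)$, not $o(1)$. So the paper needs an alteration step. By Markov, at most $N^{5/2}$ bad pairs occur. All blocks in these pairs are deleted, and the surplus $(c/2)N^3>2N^{5/2}$ keeps property (2) intact.

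\textbf{Your route.} You place $\rho=N^{-(k-D-3/2)}$ low enough that $\mathbb E Y=O(N^{2k-d}\rho^2)=O(N^{-1})$. Property (1) therefore holds outright with high probability, and no deletion is needed. You only need one extension per triple. So the elementary bound $(1-\rho)^T\le e^{-\rho T}\le e^{-cN^{3/2}}$ replaces the Chernoff estimate, and it still beats the factor $m^N$. Your exponent computations and the candidate count $T\ge cN^{k-|R|}$ check out.

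\textbf{Comparison.} Your argument is leaner. The paper's surplus-and-deletion argument tolerates a nonvanishing expected number of bad pairs, which gives a little more room between $d$ and $D$. The deletion-free route needs $d\ge 2D+3$. Alteration would in principle also work with $d=2D+2$, for example with the same $\rho$, where $\mathbb E Y=O(N)$ deletions are absorbed by $\Omega(N^{3/2})$ extensions. With $d=2D+4$, both routes work.

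\textbf{Two small slips, neither affecting the proof.}
\begin{enumerate}
\item In your closing remark, the condition $N^{2k-d}\rho^2\to0$ gives the upper endpoint $N^{-(k-d/2)}$, not $N^{-(k-d/2+1)}$. With your written endpoint and $d=2D+4$, the window would be empty. With the corrected endpoint it is nonempty exactly when $d>2D+2$, as you state, and your chosen $\rho$ lies inside it.
\item The lower bound $\varepsilon N$ applies only to colour classes with $q_i>0$, not to every nonempty colour class. Your product bound already uses only this, since colours with $q_i=0$ contribute a factor $1$.
\end{enumerate}
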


\begin{proof}
Fix a set $V$ of size $N$ and select each of its $k$-element subsets
independently with probability
\[
 \rho=N^{-k+D+3}.
\]
Since $k>2D+4$, we have $0<\rho\le1$ for $N\ge1$.

Call a triple $(\kappa,R,q)$ \emph{eligible} if it satisfies the
hypotheses in (2), and put $r=|R|$. The number of $k$-element subsets
that would provide the required extension is
\[
 T(\kappa,R,q)=
 \prod_{i=1}^m\binom{|\kappa^{-1}(i)\setminus R|}{q_i}.
\]
There is a constant $c>0$, depending only on the fixed parameters
$m,D,k,\varepsilon$, such that
\[
 T(\kappa,R,q)\ge cN^{k-r}
\]
for every eligible triple and all sufficiently large $N$. Indeed, if
$q_i>0$ and $N\ge2k/\varepsilon$, then
\[
 \binom{|\kappa^{-1}(i)\setminus R|}{q_i}
 \ge \frac{(\varepsilon N/2)^{q_i}}{q_i!}.
\]
There are only finitely many possible vectors $q$ with $\len q\le k$,
so the constants in the resulting product have a common positive lower
bound. The same $c$ therefore works for every eligible colouring,
every core $R$ of size at most $D$, and every permitted vector $q$.
All parameters $m,D,k,\varepsilon$ are fixed before $N$ tends to
infinity.

Let $X$ be the number of selected extensions of a fixed eligible triple.
It is binomial, with mean
\[
 \mu=T(\kappa,R,q)\rho
      \ge cN^{D+3-r}\ge cN^3.
\]
We use the following elementary Chernoff bound~\cite{Chernoff}:
\[
 \Pr(X\le\mu/2)\le e^{-\mu/8}.
\]
For completeness, it follows by applying Markov's inequality to
$2^{-X}$:
\[
 \Pr(X\le\mu/2)
 \le 2^{\mu/2}\mathbb E(2^{-X})
 =2^{\mu/2}(1-\rho/2)^{T(\kappa,R,q)}
 \le \exp\bigl((\log 2-1)\mu/2\bigr)
 \le e^{-\mu/8}.
\]
Consequently, the probability that this triple has fewer than
$(c/2)N^3$ selected extensions is at most $e^{-cN^3/8}$.
There are at most
\[
 m^N\left(\sum_{r=0}^D\binom Nr\right)(k+1)^m
\]
triples. Multiplying this bound by $e^{-cN^3/8}$ gives a quantity tending
to zero. Thus, with probability tending to one, every eligible triple
has at least $(c/2)N^3$ selected extensions.

It remains to control intersections. Let $Y$ count ordered pairs of
distinct selected blocks with intersection of size at least $d$. Then
\begin{align*}
 \mathbb EY
 &\le\binom Nk\sum_{j=d}^{k-1}
       \binom kj\binom{N-k}{k-j}\rho^2\\
 &=O(N^{2k-d}\rho^2)
  =O(N^{2D+6-d})=O(N^2).
\end{align*}
The constants here depend only on $k$ and $D$. Hence
$\Pr(Y>N^{5/2})=O(N^{-1/2})$ by Markov's inequality.
For sufficiently large $N$, choose a realization in which every eligible
triple has at least $(c/2)N^3$ extensions and $Y\le N^{5/2}$.
Delete every block belonging to a pair counted by $Y$. This removes at
most $2N^{5/2}$ blocks. Since
\[
 (c/2)N^3>2N^{5/2}
\]
for all sufficiently large $N$, at least one extension of every eligible
triple remains. The surviving blocks satisfy both (1) and (2).
The argument applies to every sufficiently large integer $N$, which also
proves the final assertion.
\end{proof}

\section{Multiplicity vectors of words in a finite semigroup}
\label{sec:languages}

For an alphabet of $m$ letters, the \emph{multiplicity vector} of a word
is the vector in $\NN^m$ whose $i$th coordinate counts the occurrences
of the $i$th letter. Its coordinate sum is the length of the word.
Such vectors are usually called \emph{Parikh vectors}. Parikh's
theorem establishes semilinearity for context-free languages
\cite{Parikh}; the regular-language case and its quantitative aspects
are discussed by To~\cite{To}. We need only the regular-language case,
for which we give a direct proof below.

A subset of $\NN^m$ is \emph{linear} if it has the form
\[
 b+\NN P
 =\left\{b+\sum_{h\in P}\lambda_h h:
                   \lambda_h\in\NN\right\},
\]
where $b\in\NN^m$ and $P\subseteq\NN^m$ is finite. It is
\emph{semilinear} if it is a finite union of linear sets. Empty unions
and empty sets of periods are allowed.

\begin{lemma}\label{lem:semilinear}
The set of multiplicity vectors of a regular language is semilinear.
\end{lemma}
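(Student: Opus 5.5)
We prove Lemma~\ref{lem:semilinear} directly, by induction on the structure of a regular expression denoting the language. Every regular language over an $m$-letter alphabet arises from the empty language, $\{\text{empty word}\}$ and the single letters by finitely many unions, concatenations and Kleene stars. The multiplicity map $\psi$ from words to $\NN^m$ is a monoid homomorphism from the free monoid to $(\NN^m,+)$. It therefore satisfies
\[
 \psi(L\cup L')=\psi(L)\cup\psi(L'),\qquad
 \psi(LL')=\psi(L)+\psi(L'),\qquad
 \psi(L^*)=\psi(L)^{\oplus},
\]
where $X^{\oplus}$ denotes the submonoid of $\NN^m$ generated by $X$. The base cases are the empty set (an empty union), $\{0\}=0+\NN\varnothing$, and $\{e_i\}$. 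It remains to show that semilinear sets are closed under union, Minkowski sum and submonoid generation.

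Union is immediate, since a union of two finite unions of linear sets is again one. For Minkowski sums, distribute over the finite unions. It then suffices to observe that
\[
 (b+\NN P)+(b'+\NN P')=(b+b')+\NN(P\cup P').
\]

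For the star, write $X=\bigcup_{j=1}^t(b_j+\NN P_j)$. An element of $X^{\oplus}$ is a finite sum of elements of $X$. Group the summands according to the linear set $j$ from which they are taken; if several apply, fix one. Suppose that $\ell_j$ summands come from set $j$. Their total is $\ell_jb_j+\NN P_j$, and this equals $0$ when $\ell_j=0$. For $\ell_j\ge1$ it equals
\[
 b_j+\NN(P_j\cup\{b_j\}).
\]
Hence
\[
 X^{\oplus}=\bigcup_{J\subseteq\{1,\dots,t\}}\Bigl(\sum_{j\in J}b_j+\NN\bigcup_{j\in J}(P_j\cup\{b_j\})\Bigr).
\]
This is a union of $2^t$ linear sets, with $J=\varnothing$ giving $\{0\}$. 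Both inclusions need checking. Any element on the right is a sum of at least one element from each linear set indexed by $J$: the surplus copies of $b_j$ combine with the periods of $P_j$ to form further elements of $b_j+\NN P_j$. So the right side lies in $X^{\oplus}$. The grouping argument above gives the reverse inclusion.

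The only point requiring care is the star step. The naive formula $\NN(\{b_j\}\cup P_j)$ wrongly permits periods of $P_j$ without any base $b_j$. Requiring at least one base $b_j$ for each $j\in J$, as above, avoids this. If one prefers automata to regular expressions, Kleene's theorem supplies the regular expression. Alternatively, one can argue by path decomposition into simple cycles, but the inductive route avoids the pumping bookkeeping and is what I would write.
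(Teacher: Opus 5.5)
Your proof is correct and follows the paper's argument essentially step for step. Both use induction over regular expressions, with the multiplicity map sending union, concatenation and Kleene star to union, Minkowski sum and generated submonoid, and both use the same $2^t$-term formula for the star (the paper writes $\{0\}$ separately, which is your $J=\varnothing$ term). The only difference is that the paper spells out the state-elimination construction of a regular expression from an automaton, whereas you cite Kleene's theorem.
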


\begin{proof}
Semilinear sets are closed under finite unions and Minkowski sums, since
\[
 (b+\NN P)+(c+\NN Q)=(b+c)+\NN(P\cup Q).
\]
They are also closed under forming the additive submonoid generated by
a set. To see this, write
$U=\bigcup_{i=1}^r(b_i+\NN P_i)$ and let $U^{\oplus *}$ be the set
of all finite sums of elements of $U$, including the empty sum. Then
\begin{equation}\label{eq:star}
 U^{\oplus *}=\{0\}\ \cup\!
 \bigcup_{\varnothing\ne I\subseteq\{1,\ldots,r\}}
 \left(\sum_{i\in I}b_i+
 \NN\left(\{b_i:i\in I\}\cup\bigcup_{i\in I}P_i\right)\right).
\end{equation}
For the forward inclusion, choose a linear component for each summand
and let $I$ consist of the components used. One copy of each $b_i$,
$i\in I$, contributes the displayed base; further copies of bases and
all periods contribute to the generated monoid. Conversely, an element
on the right is a sum of elements of $U$: use one summand from each
component in $I$, place all of that component's period contributions in
this summand, and add the required further copies of its base.

\begingroup
\emergencystretch=3em
Multiplicity vectors take language union to set union, concatenation
to Minkowski sum, and Kleene star to additive star. Single letters, the
empty word, and the empty language have semilinear sets of multiplicity
vectors. It follows by induction that every regular expression does too.
For completeness, a finite automaton gives such an expression as follows.
Number its states $1,\ldots,t$, and let $R_{ij}^{(r)}$ describe paths
from $i$ to $j$ whose internal states lie in $\{1,\ldots,r\}$.
The expressions $R_{ij}^{(0)}$ consist of the labels of direct edges,
together with the empty word when $i=j$. Successively use
\par\endgroup
\[
 R_{ij}^{(r)}=
 R_{ij}^{(r-1)}\ \cup\
 R_{ir}^{(r-1)}\bigl(R_{rr}^{(r-1)}\bigr)^*
 R_{rj}^{(r-1)}.
\]
After the last state, take the union over the initial and accepting
states. Thus every regular language has the required semilinear image.
\end{proof}

\begin{lemma}\label{lem:core}
Let $M$ be a finite semigroup, regarded as an alphabet of size $m$.
For $s\in M$, let $L_s$ consist of all nonempty words whose ordered
product in $M$ is $s$. There are integers $D,L\ge1$ and a real number
$\varepsilon>0$, depending only on $M$, with the following property.
For every sufficiently long word $w\in L_s$, write $z$ for its
multiplicity vector and $N$ for its length. There are vectors
$c,h\in\NN^m$, with $h\ne0$, such that
\begin{enumerate}[$(1)$]
\item $c\le z$ coordinatewise, $\len c\le D$, and
      $\len c\equiv N\pmod L$;
\item $\len h$ divides $L$, and $c+\mu h$ is the multiplicity vector
      of a word in $L_s$ for every $\mu\in\NN$;
\item $h_i>0$ implies $z_i\ge2\varepsilon N$.
\end{enumerate}
The length threshold is independent of $s$. It may also be chosen so
that, whenever $h_i>0$, one has $z_i-c_i\ge\varepsilon N$.
\end{lemma}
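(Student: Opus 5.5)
We will prove Lemma~\ref{lem:core} by a pumping argument inside a fixed finite automaton for $L_s$, using the semilinear description from Lemma~\ref{lem:semilinear} only as a guide for which periods occur. The natural automaton has state set $M^1$ (the monoid obtained by adjoining an identity). We start at $1$, reading letter $x$ moves from state $t$ to state $tx$, and we accept at $s$. This has $t=|M|+1$ states, independent of $s$. Put $L=t!$. Every simple cycle has length at most $t$, so its length divides $L$.

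Fix a word $w\in L_s$ of length $N$ and follow its accepting run. We peel off simple cycles repeatedly. Any path longer than $t$ repeats a state, so it contains a simple cycle of length $\le t$ that can be excised; the remaining path is still an accepting run from $1$ to $s$. Iterating, the run decomposes (as a multiset of edges) into a simple path of length $<t$ together with a multiset of simple cycles, each of length at most $t$. There are finitely many simple cycle types. Grouping by type, the letters of $w$ are covered by the base path together with $\sum_C n_C\cdot(\text{cycle }C)$.

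The real point is the density condition (3), and for this we choose $h$ well. The cycles cover $N-O(t)$ letters, and there are at most $K$ cycle types, where $K$ depends only on $M$. Hence some type $C$ satisfies $n_C\,|C|\ge (N-t)/K$, so $n_C\ge N/(2Kt)$ for large $N$. Let $h$ be the multiplicity vector of $C$. Every letter occurring in $C$ occurs in $w$ at least $n_C$ times. Setting $\varepsilon=1/(8Kt)$ gives $z_i\ge n_C\ge 4\varepsilon N\ge 2\varepsilon N$ whenever $h_i>0$.

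Next we build $c$. Cycle removals can be reordered, so the word whose run is the base path with a single copy of $C$ inserted at a state it visits is legitimate. This needs care: the cycle must be attached at a state on the reduced path. We handle this by excising cycles greedily while keeping one copy of each type used, arranged so that the remaining structure is a connected "skeleton". That skeleton is a path with at most one copy of each used cycle type inserted, and it has length at most $t+Kt=:D_0$. The skeleton, with the chosen cycle $C$ pumped $\mu$ times, gives accepted words for all $\mu\ge0$; the case $\mu=0$ is allowed because $C$ appears in the skeleton, which we interpret as $c$ being the skeleton minus one copy of $C$, itself still accepted.

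For the congruence $\len c\equiv N\pmod L$, we add further full copies of cycles: for each used type $C'$, add $r_{C'}$ extra copies with $0\le r_{C'}<L/|C'|$ and $r_{C'}\le n_{C'}-1$. Since $w$ itself is the skeleton plus multiples of cycle lengths, $N-\len(\text{skeleton})$ is a sum of $n'_{C'}|C'|$. Reducing each multiplier modulo $L/|C'|$ gives $c$ with $\len c\le D:=D_0+KL$ and $c\le z$ (the copies used are actual copies inside $w$). Pumping is unaffected. Finally $z_i-c_i\ge 2\varepsilon N-D\ge\varepsilon N$ once $N\ge D/\varepsilon$, and all thresholds depend only on $M$.

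The main obstacle is the connectivity bookkeeping: we must ensure that removing all but boundedly many cycles leaves an actual accepting path in which $C$ can be reinserted. We plan to use the standard fact that a multiset of edges forming a connected closed-walk-plus-path, with balanced degrees, is Eulerian. The skeleton is kept connected by retaining one representative cycle per type, attached at a vertex of the base path or of earlier retained cycles.
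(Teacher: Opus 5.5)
Your proof is correct in substance but takes a different route from the paper's. The paper never works with runs. It uses Lemma~\ref{lem:semilinear} to fix finitely many linear sets $b+\NN P$ covering the multiplicity vectors of all the $L_s$. It writes $z=b+\sum_i\lambda_iv_i$ in one of them, takes $h=v_a$ for a period with $\lambda_a\ge2\varepsilon N$, and puts $c=b+\sum_i(\lambda_i\bmod L)v_i$. Every vector of a linear component is automatically the multiplicity vector of a word in $L_s$, so condition (2) costs nothing there; connectivity is absorbed into the closure of semilinear sets under star.

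Your choice of a heavily used simple cycle type as $h$, and your reduction of cycle multiplicities modulo $L/|C'|$, are the run-level analogues of these two steps. Because you decompose the accepting run in the $(|M|+1)$-state automaton yourself, you must prove realizability directly, and your Euler plan does this. The cleanest way to state it: the base path, plus one copy of each cycle type used, plus any further copies of those types, has exactly the same set of distinct labelled edges as the run of $w$. So its support is weakly connected, and its degrees are balanced except for $+1$ at $1$ and $-1$ at $s$. Hence some trail from $1$ to $s$ traverses it, and the label of that trail is a word of $L_s$. No bookkeeping of where cycles were attached is needed.

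Your route gives a self-contained argument with explicit constants, for instance $L=t!$, $D\le t+Kt+KL$ and $\varepsilon=1/(8Kt)$, with $t=|M|+1$ and $K$ the number of labelled simple cycles. The paper notes that this kind of quantitative control would be needed to relate its bounds to equational complexity. The paper's constants come from semilinear representations that are never made explicit.

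One sentence needs correcting. Since (2) includes $\mu=0$, the vector $c$ must itself be realizable. So you cannot take $c$ to be the skeleton minus its copy of $C$. When no padding copy of $C$ is added, removing that copy can disconnect the support, because another retained cycle may be attached only at a state of $C$. Take $c$ to be the whole skeleton plus the padding copies. Your count $r_{C'}\le n_{C'}-1$ already presupposes this, and it still gives $c\le z$, $\len{c}\le D$ and $\len{c}\equiv N\pmod L$.
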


\begin{proof}
Each language $L_s$ is regular. An automaton recognizing it has the states
of $M$ and a new initial state. Reading a letter $a$ in the initial state
leads to $a$; reading $a$ in state $x\in M$ leads to $xa$. The only
accepting state is $s$, and the initial state is not accepting.

By Lemma~\ref{lem:semilinear}, fix a semilinear representation for the
multiplicity vectors of each $L_s$. There are finitely many linear
components in these representations altogether. Delete zero period
vectors. At least one nonzero period remains, since the union of the
languages $L_s$ is $M^+$ and has words of arbitrarily large length.
Let $L$ be the least common multiple of the lengths of all the nonzero
period vectors. Put
\[
 D=\max\left\{1,\
 \max_{(b,P)}\left(\len b+(L-1)\sum_{h\in P}\len h\right)\right\},
\]
where $(b,P)$ ranges over all the chosen components. Let $B_0$ be the
largest base length, let $J\ge1$ be the largest number of periods in a
component, and let $C\ge1$ be the largest period length. Set
\[
 \varepsilon=\frac1{4JC}.
\]

Take $w\in L_s$ of length $N>2B_0$ and choose a component containing
its multiplicity vector. Write
\[
 z=b+\sum_{i=1}^j\lambda_i v_i.
\]
Since $N>2B_0$, this component has at least one period. Moreover,
\[
 \sum_{i=1}^j\lambda_i\len{v_i}
 =N-\len b\ge N/2.
\]
As $j\le J$ and $\len{v_i}\le C$, some index $a$ satisfies
\[
 \lambda_a\ge\frac{N}{2JC}=2\varepsilon N.
\]
Set $h=v_a$ and
\[
 c=b+\sum_{i=1}^j(\lambda_i\bmod L)v_i,
\]
where each remainder belongs to $\{0,\ldots,L-1\}$.
Then $c\le z$ and $\len c\le D$. Every coordinate of $z-c$ is
divisible by $L$, so $\len c\equiv N\pmod L$. Also $\len h$ divides
$L$ by the choice of $L$. Adding any nonnegative multiple of $h$ to
$c$ keeps the vector in the same linear component, proving (2).
If $h_i>0$, its $i$th coordinate contributes at least $\lambda_a$ to
$z_i$, which proves (3). Finally, $c_i\le D$ gives
\[
 z_i-c_i\ge2\varepsilon N-D\ge\varepsilon N
\]
when $N\ge D/\varepsilon$. All constants were chosen using all $s\in M$
at once, so a common length threshold suffices.
\end{proof}

\section{An identity separating the hypergraph semiring}
\label{sec:separation}

For a hypergraph $\HH=(V,\mathcal E)$, associate a variable $x_v$ with
each vertex. Choose an ordering of $V$ and define
\begin{align}
 q_{\HH}&=\prod_{v\in V}x_v
                  &&\text{in the chosen order},\label{eq:globalword}\\
 t_{\HH}&=\sum_{E\in\mathcal E}
          \sum_{(v_1,\ldots,v_k)\in\operatorname{Ord}(E)}
          x_{v_1}\cdots x_{v_k},\label{eq:edgeterm}
\end{align}
where $\operatorname{Ord}(E)$ denotes all orderings of the vertices of
$E$. The term $t_{\HH}$ includes all these orderings because the
semigroup to which the identity will be applied need not be commutative.

\begin{proposition}\label{prop:separation}
Let $M$ be a finite semigroup and let $n\ge1$. There exist integers
$d,k,N$ and a hypergraph $\HH$ satisfying all the hypotheses of
Theorem~\ref{thm:local}, with $|V|=N>k$, for which
\begin{equation}\label{eq:separator}
 \Pow(M)\models t_{\HH}\approx t_{\HH}+q_{\HH},
 \qquad
 B(\HH)\not\models t_{\HH}\approx t_{\HH}+q_{\HH}.
\end{equation}
Here $\Pow(M)$ is the ai-semiring of nonempty subsets of $M$, with
union as addition and setwise multiplication.
\end{proposition}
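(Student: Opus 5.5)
The plan is to fix the parameters from the finite semigroup $M$ first, then build $\HH$ with Lemma~\ref{lem:random}, and verify the two halves of~\eqref{eq:separator} separately.

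\textbf{Parameters.} Regard $M$ as an alphabet of size $m=|M|$. Apply Lemma~\ref{lem:core} to $M$ to obtain $D,L\ge1$ and $\varepsilon>0$. Put $d=2D+4$, and choose an integer $k$ with
\[
 k>\max\{d,\ n(d-1),\ 2(d-1),\ D\}.
\]
Then take $N$ large, with $N\equiv k\pmod L$. We need $N$ beyond the length threshold of Lemma~\ref{lem:core} and large enough for Lemma~\ref{lem:random} with the parameters $m,D,\varepsilon,k$; the residue condition is permitted by the final clause of Lemma~\ref{lem:random}. This yields a $k$-uniform $\HH$ on $|V|=N>k$ vertices satisfying~\eqref{eq:intersection}, and hence~\eqref{eq:degreebound} and~\eqref{eq:localbound}. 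To ensure $V=\bigcup\mathcal E$, apply property~(2) with a constant colouring, $R=\{v\}$ and $q=(k-1)e_1$; this gives a block containing each vertex $v$. So all hypotheses of Theorem~\ref{thm:local} hold.

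\textbf{Failure in $B(\HH)$.} Assign $x_v\mapsto a_{\{v\}}$, which is legitimate since $1\le k-d$. Every ordering of a block multiplies to $p$, because the supports are disjoint and their union is a block. Hence every monomial of $t_{\HH}$ has value $p$, and $t_{\HH}$ evaluates to $p$. The word $q_{\HH}$ has factors of total degree $N>k$, so it evaluates to $0$. Thus $t_{\HH}+q_{\HH}$ evaluates to $p+0=0\ne p$.

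\textbf{Validity in $\Pow(M)$.} Since $t\le t+q$ trivially, it suffices to show the following: under any assignment $x_v\mapsto X_v$, the value of $q_{\HH}$ is contained in the value of $t_{\HH}$.
\begin{enumerate}[$(1)$]
\item Take an element $s=y_{v_1}\cdots y_{v_N}$ of the value of $q_{\HH}$, with $y_v\in X_v$ and $V$ listed in the chosen order. The word $w=y_{v_1}\cdots y_{v_N}$ lies in $L_s$ and has length $N$.
\item Define the colouring $\kappa(v)=y_v\in M$. Lemma~\ref{lem:core} supplies $c\le z$ and $h\ne0$, where $z$ is the multiplicity vector of $w$, so $z_i=|\kappa^{-1}(i)|$.
\item Since $\len c\equiv N\equiv k\pmod L$, $\len h$ divides $L$, and $\len c\le D<k$, there is $\mu\in\NN$ with $\len c+\mu\len h=k$. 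By Lemma~\ref{lem:core}(2), some word $u\in L_s$ of length $k$ has multiplicity vector $c+\mu h$.
\item Because $c\le z$, we can choose $R\subseteq V$ with exactly $c_i$ vertices of colour $i$; then $|R|\le D$. Put $q=\mu h$, so $\len q=k-|R|$. If $q_i>0$, the last clause of Lemma~\ref{lem:core} gives $|\kappa^{-1}(i)\setminus R|=z_i-c_i\ge\varepsilon N$.
\item Lemma~\ref{lem:random}(2) now yields a block $E\supseteq R$ whose colour counts are exactly those of $u$.
\item Order the vertices of $E$ so that their colours spell $u$. The corresponding monomial of $t_{\HH}$ contains $\prod y_{v_j}=s$.
\end{enumerate}

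The main obstacle is the length bookkeeping in steps~(3) and~(4). The block size $k$ is fixed, while the word length $N$ is large, so the product $s$ must be realized by a word of length exactly $k$ whose letters are still available as colours. This is handled by the congruence $N\equiv k\pmod L$ together with $\len h\mid L$, and by the lower bound $z_i-c_i\ge\varepsilon N$, which feeds the eligibility hypothesis of Lemma~\ref{lem:random}. The remaining care is choosing the parameters in the right order: $M$, then $D,L,\varepsilon$, then $d$, then $k$, then $N$.
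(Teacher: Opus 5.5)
Your proposal is correct and matches the paper's proof essentially step for step. It uses the same parameter order ($D,L,\varepsilon$ from Lemma~\ref{lem:core}, then $d=2D+4$, then $k$, then $N\equiv k\pmod L$), the same constant-colouring request with $R=\{v\}$ to rule out isolated vertices, and the same assignment $x_v\mapsto a_{\{v\}}$ for the failure in $B(\HH)$; it also uses the same colouring-by-letters argument, which yields a block $E\supseteq R$ with colour vector $c+\mu h$ that is then reordered into a word of $L_s$. One minor point of phrasing: \eqref{eq:degreebound} and \eqref{eq:localbound} follow from your choice of $k$, not from the hypergraph, as your ``and hence'' suggests.
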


\begin{proof}
Choose $D,L,\varepsilon$ for $M$ as in Lemma~\ref{lem:core} and put
$d=2D+4$. Choose an integer
\[
 k>\max\{d,\ 2(d-1),\ n(d-1),\ D\}.
\]
With these parameters fixed, choose $N>k$ sufficiently large for
Lemmas~\ref{lem:random} and~\ref{lem:core} to apply and such that
$N\equiv k\pmod L$. Lemma~\ref{lem:random}, with $m=|M|$, provides
a hypergraph $\HH$ on $N$ vertices satisfying the stated intersection
and extension properties. It has no isolated vertices. Indeed, given
$v\in V$, take $R=\{v\}$, colour every vertex with one colour, and
request $k-1$ further vertices of that colour. This request is eligible
for large $N$, since $D\ge1$ and $\varepsilon\le1/4$, and so a block
containing $v$ exists. The choices of $d,k$ also give
\[
 k>2(d-1),\qquad k>n(d-1),
\]
as required for Theorem~\ref{thm:local}.

We prove the first assertion of~\eqref{eq:separator}. Assign an arbitrary
nonempty subset $X_v\subseteq M$ to each variable $x_v$, and take
$s\in q_{\HH}(X_v:v\in V)$. By the definition of setwise multiplication,
there are elements $\alpha_v\in X_v$ whose product in the chosen order
of $V$ is $s$. Regard these elements, in that order, as a word $w\in L_s$,
and let $z$ be its multiplicity vector. Apply Lemma~\ref{lem:core} to
obtain $c,h$.

Colour vertex $v$ by the element $\alpha_v\in M$. Since $c\le z$, we
can select a set $R\subseteq V$ containing exactly $c_i$ vertices of
each colour $i$. Thus $|R|=\len c\le D$, and
\[
 |R|\equiv N\equiv k\pmod L.
\]
Because $\len h$ divides $L$ and $k>D\ge|R|$, the number
\[
 \mu=\frac{k-|R|}{\len h}
\]
is a positive integer. The vector $\mu h$ has coordinate sum
$k-|R|$. Every colour used by it still occurs at least $\varepsilon N$
times outside $R$, by Lemma~\ref{lem:core}. The extension property
therefore gives a block $E\supseteq R$ whose vertices outside $R$ have
multiplicity vector $\mu h$. The colours on the whole block have
multiplicity vector $c+\mu h$.

By Lemma~\ref{lem:core}(2), there is a word in $L_s$ with precisely this
vector. For each letter $i$, the number of its occurrences in that
word equals the number of vertices of $E$ coloured $i$. Choose a
bijection between those occurrences and those vertices, independently
for each colour. Reading the matched vertices in the order of the
word lists each vertex of $E$ exactly once. This yields an ordering
$(v_1,\ldots,v_k)$ of $E$ for which
\[
 \alpha_{v_1}\cdots\alpha_{v_k}=s.
\]
The ordered product here is the product of the chosen word in $L_s$;
no commutation of factors in $M$ is involved. The corresponding
monomial is a summand of $t_{\HH}$, and each
$\alpha_{v_i}$ belongs to $X_{v_i}$. Hence
$s\in t_{\HH}(X_v:v\in V)$. This proves
\[
 q_{\HH}(X_v:v\in V)\subseteq t_{\HH}(X_v:v\in V)
\]
for every assignment, which is exactly the asserted identity in
$\Pow(M)$.

For the second assertion, evaluate $x_v$ as $a_{\{v\}}$ in $B(\HH)$.
The product over each block, in any order, is $p$, so $t_{\HH}$ has
value $p$. The word $q_{\HH}$ is a product of $N>k$ degree-one elements
and therefore has value $0$. Its addition to $p$ gives $0$, whereas
$p\ne0$. Thus the identity fails in $B(\HH)$.
\end{proof}

\section{Proof of the main theorem}
\label{sec:conclusion}

\begin{proof}[Proof of Theorem~\ref{thm:nilinterval}]
Let $A$ be a finite ai-semiring and suppose that
$\mathcal N\subseteq\mathcal W\subseteq\Var(A)$.
Fix a positive integer $n$, and let $M$ be the multiplicative reduct
of $A$. Apply Proposition~\ref{prop:separation} to $M$ and $n$, and
write $B_n=B(\HH)$ for the resulting finite semiring. The identity
\[
 t_{\HH}\approx t_{\HH}+q_{\HH}
\]
holds in $\Pow(M)$ and hence, by Proposition~\ref{prop:powersum}, in
$A$. It therefore holds throughout $\mathcal W$. Since it fails in
$B_n$, we have $B_n\notin\Var(A)$ and therefore
$B_n\notin\mathcal W$.

Now let $u\approx v$ be any identity of $\mathcal W$ involving at most
$n$ variables. Under an arbitrary assignment of these variables in
$B_n$, their values generate a subsemiring $C$.
Theorem~\ref{thm:local} gives $C\in\mathcal N\subseteq\mathcal W$.
Consequently the identity holds under the assignment. Thus $B_n$
satisfies every identity of $\mathcal W$ in at most $n$ variables.
Since $B_n\notin\mathcal W$, these identities cannot form a basis
for $\mathcal W$. This holds for every $n$, proving infinite
axiomatic rank. The finite, commutative, flat and nilpotent properties
of $B_n$, and the equality of all its squares to its absorbing top,
follow from its construction.

Finally, if $S_7\in\Var(A)$, then
$\mathcal N\subseteq\Var(S_7)\subseteq\Var(A)$. Taking
$\mathcal W=\Var(A)$ shows that $A$ is nonfinitely based.
Since $A$ was arbitrary, $S_7$ is strongly nonfinitely based, as
asserted in Corollary~\ref{thm:main}.
\end{proof}

\section{Further consequences}
\label{sec:extensions}

\subsection{Equational definitions on finite nilpotent semirings}

Let $\mathcal F_{\mathrm{nil}}$ denote the class of finite commutative
flat semirings whose multiplicative reducts are nilpotent and in which
all elements have square equal to the absorbing top.
The next consequence concerns identities valid only on the specified
finite members of a variety.

\begin{corollary}\label{cor:finiteequational}
Suppose that $A$ is finite and
$\mathcal N\subseteq\mathcal W\subseteq\Var(A)$.
There is no set $\Sigma$ of identities with a fixed finite bound on
the number of variables such that, for every
$B\in\mathcal F_{\mathrm{nil}}$,
\[
 B\models\Sigma\quad\Longleftrightarrow\quad B\in\mathcal W.
\]
In particular, no finite set of identities defines membership in
$\mathcal W$ within $\mathcal F_{\mathrm{nil}}$.
\end{corollary}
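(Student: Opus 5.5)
Suppose for a contradiction that such a set $\Sigma$ exists, with every identity in $\Sigma$ using at most $n$ variables. The plan is to test $\Sigma$ on the semiring $B_n$ supplied by Theorem~\ref{thm:nilinterval} for this value of $n$. The first step is to check that $B_n$ lies in $\mathcal F_{\mathrm{nil}}$. Theorem~\ref{thm:nilinterval} states that $B_n=B(\HH)$ is a finite commutative flat semiring with nilpotent multiplicative reduct satisfying $x^2\approx y^2$. To confirm that this common square is the absorbing top, note that every element of $B(\HH)$ has square $0$ and that $0$ absorbs both operations. Since the addition is flat, $0$ is the top of the additive semilattice.

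The second step shows that $B_n\models\Sigma$. I would take the equivalence for $B\in\mathcal F_{\mathrm{nil}}$ and apply it to every finite member of $\mathcal W\cap\mathcal F_{\mathrm{nil}}$. In particular it applies to each subsemiring $C$ of $B_n$ generated by at most $n$ elements. Any such $C$ is again a finite commutative flat semiring whose reduct is nilpotent and whose squares equal $0$, the top. Indeed, $0\in C$ because it is the square of a generator, and flatness and nilpotency pass to subsemirings. Hence $C\in\mathcal F_{\mathrm{nil}}$. By Theorem~\ref{thm:nilinterval}(1), $C\in\mathcal N\subseteq\mathcal W$, so the assumed equivalence gives $C\models\Sigma$.

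Now take an identity of $\Sigma$ and an assignment of its at most $n$ variables in $B_n$. The values generate such a subsemiring $C$, and the identity holds in $C$, so it holds under the assignment. This is exactly the argument of Lemma~\ref{lem:localglobal}, and it gives $B_n\models\Sigma$. Since $B_n\in\mathcal F_{\mathrm{nil}}$, the equivalence then yields $B_n\in\mathcal W$. This contradicts Theorem~\ref{thm:nilinterval}(2), because $\mathcal W\subseteq\Var(A)$ while $B_n\notin\Var(A)$.

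The ``in particular'' clause follows immediately, since a finite set of identities involves only finitely many variables and so has a finite bound. The one point that needs care is the closure of $\mathcal F_{\mathrm{nil}}$ under finitely generated subsemirings, especially that the absorbing top $0$ belongs to $C$. This is routine once the squares are observed to equal $0$, so I expect no real obstacle here.
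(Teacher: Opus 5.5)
Your proposal is correct and follows the paper's proof essentially step for step. Both arguments take $B_n=B(\HH)$, check that $B_n$ and its at most $n$-generated subsemirings $C$ lie in $\mathcal F_{\mathrm{nil}}$ (using that $0$ is a square and hence belongs to $C$), apply the assumed equivalence to get $C\models\Sigma$ and hence $B_n\models\Sigma$, and conclude $B_n\in\mathcal W$, contradicting $B_n\notin\Var(A)$.
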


\begin{proof}
Suppose that $\Sigma$ has the stated property, and choose $n\ge1$
bounding the number of variables in its identities.
Take the finite semiring $B_n=B(\HH)$ from the proof of
Theorem~\ref{thm:nilinterval}. It belongs to
$\mathcal F_{\mathrm{nil}}$ but not to $\mathcal W$.

Every subsemiring $C$ of $B_n$ generated by at most $n$ elements
belongs to $\mathcal N\subseteq\mathcal W$. It also belongs to
$\mathcal F_{\mathrm{nil}}$. Indeed, it is finite and commutative,
it contains the absorbing top because it contains the square of each
generator, its addition is the restriction of the flat addition of
$B_n$, and the same bound on lengths of nonzero products applies in
$C$. The assumed equivalence gives $C\models\Sigma$.
Each assignment of the variables in an identity of $\Sigma$ to $B_n$
takes its values in such a subsemiring $C$. Therefore
$B_n\models\Sigma$. The equivalence would now imply
$B_n\in\mathcal W$, a contradiction.
\end{proof}

\subsection{Simultaneous obstructions for semirings of bounded order}

For $r\ge1$, let $\mathcal K_r$ contain one representative of every
isomorphism type of ai-semiring of cardinality at most $r$, and put
$\mathcal U_r=\Var(\mathcal K_r)$. There are finitely many possible
operation tables, so $\mathcal K_r$ is finite and $\mathcal U_r$ is
generated by their finite direct product.

\begin{theorem}\label{thm:uniformsize}
For every pair of positive integers $n,r$ there exist positive
integers $d,L,k_0$ with the following property.
For every $k\ge k_0$ and every sufficiently large $N$ satisfying
$N\equiv k\pmod L$, there are a finite commutative flat semiring $B$
and an identity $\sigma$ such that:
\begin{enumerate}[$(1)$]
\item every subsemiring of $B$ generated by at most $n$ elements
belongs to $\mathcal N$;
\item every element of $B$ has square zero, and its multiplicative
reduct has nilpotency index exactly $k+1$;
\item every ai-semiring of cardinality at most $r$ satisfies
$\sigma$, whereas $B$ does not;
\item the cardinality of $B$ satisfies
\[
 N+2\ \le\ |B|\ \le\
 2+(2^k-2)\frac{\binom Nd}{\binom kd}.
\]
\end{enumerate}
Here the threshold for $N$ may depend on $k$.
In particular, $B\notin\mathcal U_r$, and for each fixed $k\ge k_0$
such semirings have arbitrarily large cardinality.
\end{theorem}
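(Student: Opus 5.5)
Apply the machinery of Proposition~\ref{prop:separation} with $M$ the multiplicative reduct of the finite direct product $A_r=\prod\mathcal K_r$. Then $\Var(A_r)=\mathcal U_r$ contains every ai-semiring of cardinality at most $r$. Because $A_r$ depends only on $r$, Lemma~\ref{lem:core} applied to this $M$ gives constants $D,L,\varepsilon$ depending only on $r$. We put $d=2D+4$ and
\[
 k_0=1+\max\{d,\ 2(d-1),\ n(d-1),\ D\}.
\]
For $k\ge k_0$ we take $N$ large enough for Lemmas~\ref{lem:random} and~\ref{lem:core} and with $N\equiv k\pmod L$. Lemma~\ref{lem:random} then supplies $\HH$, and we set $B=B(\HH)$ and $\sigma$ to be $t_{\HH}\approx t_{\HH}+q_{\HH}$. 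The proof of Proposition~\ref{prop:separation} used only these hypotheses on $k$ and $N$, not a specific choice of $k$. It therefore shows that $\Pow(M)\models\sigma$ and $B\not\models\sigma$. By Proposition~\ref{prop:powersum}, $\sigma$ holds in $A_r$, hence throughout $\mathcal U_r$. This gives (3) and shows $B\notin\mathcal U_r$. Item (1) is Theorem~\ref{thm:local}, since $k>n(d-1)$ and $k>2(d-1)$.

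For (2), every element of $B$ has square $0$ by the remark after Lemma~\ref{lem:semiring}. Any product of $k+1$ nonzero elements has degree at least $k+1$ and is therefore $0$. On the other hand, some block $E$ exists, and the product of the $k$ elements $a_{\{v\}}$ with $v\in E$ is $p\ne0$. Hence the nilpotency index is exactly $k+1$.

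For the lower bound in (4), the elements $a_{\{v\}}$ for $v\in V$ are distinct because $k-d\ge1$. Together with $p$ and $0$ they give $|B|\ge N+2$. For the upper bound, each nonzero element other than $p$ is represented by some nonempty proper subset of a block. Moreover, distinct blocks share no $d$-set, so the number of blocks is at most $\binom Nd/\binom kd$. Counting at most $2^k-2$ nonempty proper subsets per block, and adding $p$ and $0$, gives the stated bound.

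The final sentence follows because $|B|\ge N+2$ and $N$ may be taken arbitrarily large within the residue class. The main obstacle is the uniformity bookkeeping: the separation argument must work for every $k\ge k_0$, not merely for one chosen $k$. This requires checking that Lemma~\ref{lem:random} holds for each fixed $k>d$, that $\mu=(k-|R|)/\len h$ is a positive integer whenever $k\equiv N\pmod L$ and $k>D$, and that the absence of isolated vertices still holds. All of these were proved in that generality.
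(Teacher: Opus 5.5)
Your proposal is correct and follows the paper's proof essentially step for step. You use the same choice of $M$ as the multiplicative reduct of $A_r$, the same parameters $d=2D+4$ and $k_0$, and the same reuse of the proof of Proposition~\ref{prop:separation} for each fixed $k\ge k_0$; your degree argument for (2) and your block-counting and singleton bounds for (4) also match the paper's. The only cosmetic difference is in (3), where you pass through $\Var(A_r)=\mathcal U_r$ and the paper uses coordinate projections of $A_r$; these are equivalent.
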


\begin{proof}
Let $A_r$ be the direct product of the members of $\mathcal K_r$,
and let $M_r$ be its multiplicative reduct.
Apply Lemma~\ref{lem:core} to $M_r$, obtaining
$D,L,\varepsilon$, and put $d=2D+4$.
Choose $k_0$ greater than
\[
 \max\{d,\,2(d-1),\,n(d-1),\,D\}.
\]
Fix $k\ge k_0$. The proof of Proposition~\ref{prop:separation}
applies to every sufficiently large $N$ with $N\equiv k\pmod L$.
Let $\HH=(V,\mathcal E)$ be its $k$-uniform hypergraph on $N$ vertices,
and set
\[
 B=B(\HH),\qquad
 \sigma:\quad t_{\HH}\approx t_{\HH}+q_{\HH}.
\]
Theorem~\ref{thm:local} proves (1).

All nonzero elements of $B$ have positive degree, and degrees add in
a nonzero product. No degree exceeds $k$, so every product of $k+1$
elements is zero. The product of the $k$ vertex elements of any block
is $p\ne0$. Hence the nilpotency index is exactly $k+1$.
Every square is zero by the construction of $B$.

The identity $\sigma$ holds in $\Pow(M_r)$ and hence in $A_r$ by
Proposition~\ref{prop:powersum}. Each member of $\mathcal K_r$ is an
image of $A_r$ under a coordinate projection, so it satisfies
$\sigma$. Its failure in $B$ proves (3).

For the upper cardinality estimate, every $d$-element subset of $V$
belongs to at most one block, because distinct blocks have intersection
of size less than $d$. Counting pairs consisting of a block and one
of its $d$-element subsets gives
\[
 |\mathcal E|\binom kd\le\binom Nd.
\]
Before the identifications defining $B$, each block contributes at
most $2^k-2$ nonempty proper subsets. Identifying full blocks gives
one element $p$, and the absorbing element contributes one more.
Consequently
\[
 |B|\le2+(2^k-2)|\mathcal E|
      \le2+(2^k-2)\frac{\binom Nd}{\binom kd}.
\]
For the lower estimate, all $N$ singleton subsets are distinct ordinary
elements: $k>d$ ensures that their degrees lie in the ordinary range.
They are distinct from both $p$ and the absorbing element, giving
$|B|\ge N+2$. Finally, there are arbitrarily large integers in the
required residue class, so the cardinalities are unbounded even
with $k$ fixed.
\end{proof}

The bounded-order comparison in this theorem differs from
\cite[Proposition~4.15]{GJRZ}. There the upper finite semirings are
required to belong to $\mathcal N$; here they range over all finite
ai-semirings of the given cardinality bound. The present construction
also imposes the local condition in part~\textup{(1)}.

The displayed estimate bounds the cardinalities of the constructed
algebras. Relating it to the equational complexity functions studied
in \cite{Jcomplexity} would additionally require quantitative control
of the separating identities and of the parameter choices in
Lemmas~\ref{lem:core} and~\ref{lem:random}.

\begin{corollary}\label{cor:boundedsizevarieties}
For every $r\ge3$, the variety $\mathcal U_r$ generated by all
ai-semirings of cardinality at most $r$ has infinite axiomatic rank.
\end{corollary}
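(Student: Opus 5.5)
The plan is to reduce Corollary~\ref{cor:boundedsizevarieties} to Theorem~\ref{thm:nilinterval}, applied with $A=A_r$, the direct product of the members of $\mathcal K_r$. As observed before Theorem~\ref{thm:uniformsize}, $A_r$ is a finite ai-semiring and $\Var(A_r)=\mathcal U_r$.

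For $r\ge3$, the three-element semiring $S_7$ has cardinality at most $r$. Hence it is isomorphic to a member of $\mathcal K_r$, and so $S_7\in\mathcal U_r=\Var(A_r)$. This gives
\[
 \mathcal N\subseteq\Var(S_7)\subseteq\mathcal U_r .
\]
Corollary~\ref{thm:main}, or equivalently Theorem~\ref{thm:nilinterval} with $\mathcal W=\mathcal U_r$, then yields infinite axiomatic rank for $\mathcal U_r$.

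An alternative route uses Theorem~\ref{thm:uniformsize} directly. For each $n$, that theorem produces $B$ whose $n$-generated subsemirings lie in $\mathcal N\subseteq\mathcal U_r$, while $B\notin\mathcal U_r$ by part~(3). Lemma~\ref{lem:localglobal} then gives the conclusion, once again using $\mathcal N\subseteq\mathcal U_r$.

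There is no real obstacle here. The only points to check are that $\mathcal U_r$ is generated by the single finite algebra $A_r$, and that the hypothesis $r\ge3$ is exactly what places $S_7$ in $\mathcal K_r$ up to isomorphism.
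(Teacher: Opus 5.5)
Your proposal is correct and follows the paper's own proof: take $A=A_r$, observe that $S_7\in\mathcal U_r=\Var(A_r)$ for $r\ge3$, so $\mathcal N\subseteq\mathcal U_r$, and apply Theorem~\ref{thm:nilinterval} with $\mathcal W=\mathcal U_r$. Your alternative route through Theorem~\ref{thm:uniformsize} and Lemma~\ref{lem:localglobal} is also valid, but it only spells out the same argument in more detail.
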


\begin{proof}
The finite semiring $A_r$ generates $\mathcal U_r$, and
$S_7\in\mathcal U_r$ for $r\ge3$. Hence
$\mathcal N\subseteq\mathcal U_r=\Var(A_r)$, and
Theorem~\ref{thm:nilinterval} applies.
\end{proof}

\begin{corollary}\label{cor:exactorder}
Let $\mathcal T_r$ be the variety generated by all ai-semirings of
cardinality exactly $r$. Then $\mathcal T_r=\mathcal U_r$ for every
$r\ge1$. This variety is finitely based if and only if $r\le2$;
for every $r\ge3$ it has infinite axiomatic rank.
\end{corollary}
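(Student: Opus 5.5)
The plan is to prove the equality $\mathcal T_r=\mathcal U_r$ directly, and then reduce the finite basis claims to results already available.

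\textbf{Step 1: the inclusion $\mathcal T_r\subseteq\mathcal U_r$.} This is immediate, since every ai-semiring of cardinality exactly $r$ is isomorphic to a member of $\mathcal K_r$.

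\textbf{Step 2: the inclusion $\mathcal U_r\subseteq\mathcal T_r$.} It suffices to show that every ai-semiring $S$ with $|S|=s\le r$ embeds as a subsemiring into an ai-semiring of cardinality exactly $r$. I would use a simple one-point extension. Adjoin a new element $\infty$ and put
\[
 x+\infty=\infty+x=\infty,\qquad x\infty=\infty x=\infty
\]
for all $x\in S\cup\{\infty\}$. The checks are routine:
\begin{itemize}
\item addition remains a semilattice, with $\infty$ as the top element;
\item multiplication remains associative, since any product involving $\infty$ is $\infty$;
\item distributivity holds, because if any of $x,y,z$ equals $\infty$, then both $x(y+z)$ and $xy+xz$ equal $\infty$, and symmetrically on the other side.
\end{itemize}
So $S\cup\{\infty\}$ is an ai-semiring of cardinality $s+1$ containing $S$ as a subsemiring. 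Iterating $r-s$ times produces an ai-semiring of cardinality exactly $r$ containing $S$. Hence each member of $\mathcal K_r$ lies in $\mathcal T_r$, since varieties are closed under subalgebras, and therefore $\mathcal U_r\subseteq\mathcal T_r$.

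\textbf{Step 3: the finite basis dichotomy.} For $r=1$ the variety is trivial and is defined by $x\approx y$. For $r=2$, $\mathcal U_2$ is the variety generated by all two-element ai-semirings, which is (hereditarily) finitely based by~\cite{SR}. For $r\ge3$, Corollary~\ref{cor:boundedsizevarieties} gives infinite axiomatic rank. Infinite axiomatic rank excludes a finite basis, since a finite basis uses only finitely many variables, so these varieties are nonfinitely based.

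None of the steps is a real obstacle. The only point needing care is that the one-point extension preserves distributivity and associativity, which I expect to go through as indicated, and that the equality $\mathcal T_r=\mathcal U_r$ transfers the result of Corollary~\ref{cor:boundedsizevarieties} verbatim.
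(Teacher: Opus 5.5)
Your proof is correct and follows the paper's argument. You embed each ai-semiring of cardinality at most $r$ into one of cardinality exactly $r$ by repeated one-point extensions, then cite \cite{SR} for $r\le2$ and Corollary~\ref{cor:boundedsizevarieties} for $r\ge3$. The only difference is the adjoined element: yours absorbs both operations, while the paper adjoins $\bot$ as an additive identity and multiplicative zero, the construction it reuses in Section~\ref{sec:constants}. Both give valid ai-semirings, and yours makes the distributivity check slightly more immediate.
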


\begin{proof}
Given an ai-semiring $C$, adjoin a fresh element $\bot$ and prescribe
\[
 c+\bot=\bot+c=c,\qquad c\bot=\bot c=\bot
 \quad(c\in C\cup\{\bot\}).
\]
The enlarged addition is a semilattice operation with new identity,
and the enlarged multiplication is associative with new zero.
For distributivity, if the outside factor is $\bot$, both sides
are $\bot$; if a summand is $\bot$, its zero product disappears
from the sum on the other side. All other cases are the original
distributive laws in $C$. Thus the extension is an ai-semiring
of cardinality $|C|+1$ containing $C$ as a binary subsemiring.
Repeating the construction embeds every ai-semiring of cardinality
at most $r$ into one of cardinality exactly $r$.
Consequently $\mathcal U_r\subseteq\mathcal T_r$; the reverse
inclusion follows from the definitions.

For $r\ge3$, Corollary~\ref{cor:boundedsizevarieties} gives infinite
axiomatic rank. For $r=1$, the variety is trivial and is defined by
$x\approx y$. For $r=2$, finite basability is a result of Shao and
Ren~\cite{SR}, who proved that every subvariety of the variety
generated by all two-element ai-semirings has a finite basis.
\end{proof}

The infinite axiomatic rank assertion for $r=3$ recovers the
ai-semiring part of Wang, Ju and Wang~\cite{WJWthree}.
Corollary~\ref{cor:exactorder} extends this conclusion to every
order $r\ge3$ within the class of ai-semirings.

\subsection{Finite relatively free semirings}

For an ai-semiring variety $\mathcal L$, let $F_{\mathcal L}(r)$
denote its relatively free algebra on $r$ generators.

\begin{corollary}\label{cor:finitefree}
Let $\mathcal L$ be a locally finite ai-semiring variety containing
$S_7$. For every $r\ge2$, the finite semiring $F_{\mathcal L}(r)$ is
strongly nonfinitely based, and its generated variety has infinite
axiomatic rank.
\end{corollary}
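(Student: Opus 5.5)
The plan is to reduce the statement to Corollary~\ref{thm:main} by showing that $F_{\mathcal L}(r)$ is finite, generates a variety containing $S_7$, and therefore falls under Theorem~\ref{thm:nilinterval}. Finiteness is immediate: $\mathcal L$ is locally finite, so its relatively free algebra on finitely many generators is finite. Strong nonfinite basability then means the following. For every finite ai-semiring $A$ with $F_{\mathcal L}(r)\in\Var(A)$, the semiring $A$ is nonfinitely based. Infinite axiomatic rank of $\Var(F_{\mathcal L}(r))$ is the special case $A=F_{\mathcal L}(r)$.

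The key step is to show $S_7\in\Var(F_{\mathcal L}(r))$ whenever $r\ge2$. I would use the standard fact that $\Var(F_{\mathcal L}(r))$ is the variety defined by the identities of $\mathcal L$ in at most $r$ variables. Indeed, $F_{\mathcal L}(r)$ satisfies exactly those identities, together with all their consequences under identification of variables. Hence an algebra lies in $\Var(F_{\mathcal L}(r))$ if and only if all its $r$-generated subalgebras lie in $\mathcal L$. It therefore suffices to check that every subsemiring of $S_7$ generated by at most $r$ elements lies in $\mathcal L$, which is trivially true because $S_7\in\mathcal L$. This argument does not even use $r\ge2$. Alternatively, and more concretely: $S_7$ is generated by the two elements $1$ and $a$, since $1+a=0$. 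So $S_7$ is a homomorphic image of $F_{\mathcal L}(2)$, and $F_{\mathcal L}(2)$ is a retract, hence a subalgebra, of $F_{\mathcal L}(r)$ for $r\ge2$. I would present this second route because it is cleaner and explains the hypothesis $r\ge2$.

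Now let $A$ be finite with $F_{\mathcal L}(r)\in\Var(A)$. Then $S_7\in\Var(F_{\mathcal L}(r))\subseteq\Var(A)$, and Corollary~\ref{thm:main} gives that $\Var(A)$ has infinite axiomatic rank. In particular $A$ is nonfinitely based. Taking $A=F_{\mathcal L}(r)$ yields the rank statement.

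The only point needing care is the embedding of $F_{\mathcal L}(2)$ into $F_{\mathcal L}(r)$ as a subsemiring. In a signature without constants, the map sending $x_1,x_2$ to themselves and $x_i$ to $x_2$ for $i\ge3$ is a retraction onto the subsemiring generated by $x_1,x_2$. That subsemiring is free in $\mathcal L$ on $x_1,x_2$, so the embedding holds; I do not expect any genuine obstacle beyond stating this correctly.
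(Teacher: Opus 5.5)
The second route, which you say you would present, is correct and is essentially the paper's proof. You exhibit $S_7$ as a two-generated member of $\mathcal L$, conclude $S_7\in\Var(F_{\mathcal L}(r))$, and then invoke Theorem~\ref{thm:nilinterval} and Corollary~\ref{thm:main}. The paper is slightly more direct. It maps $F_{\mathcal L}(r)$ itself onto $S_7$ by $x_1\mapsto1$, $x_2\mapsto a$ and $x_i\mapsto1$ for $i\ge3$. Then $S_7$ is a homomorphic image of $F_{\mathcal L}(r)$, so the embedding of $F_{\mathcal L}(2)$ as a retract is unnecessary.

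Your first route is wrong, and you should discard it rather than offer it as an alternative. It is not true that $\Var(F_{\mathcal L}(r))$ is the variety defined by $\Id_r(\mathcal L)$. One always has $\Var(F_{\mathcal L}(r))\subseteq\mathcal L\subseteq\operatorname{Mod}(\Id_r(\mathcal L))$. Equality of the ends would therefore force every such $\mathcal L$ to be axiomatised by its $r$-variable identities. For $\mathcal L=\Var(S_7)$ this contradicts Corollary~\ref{thm:main} itself. Nor is it true that $F_{\mathcal L}(r)$ satisfies only the $r$-variable identities and their consequences: it satisfies every identity of $\mathcal L$, and possibly more.

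A concrete failure: take $\mathcal L=\Var(S_7)$ and $r=1$. Then $F_{\mathcal L}(1)$ is the two-element semiring $\{x,x^2\}$ of unary term functions of $S_7$, with $x^2$ absorbing for both operations. It satisfies $xy\approx x^2$, which fails in $S_7$ at $x=1$, $y=a$. Yet every one-generated subsemiring of $S_7$ lies in $\mathcal L$. Your remark that the argument ``does not even use $r\ge2$'' was the warning sign.

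What is true is the following. $\operatorname{Mod}(\Id_r(\mathcal L))$ consists of the algebras whose $r$-generated subalgebras lie in $\mathcal L$. By contrast, $\Var(F_{\mathcal L}(r))$ is the generally smaller variety generated by the $r$-generated members of $\mathcal L$. Your second route correctly uses this latter description.
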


\begin{proof}
The algebra $F_{\mathcal L}(r)$ is finite by local finiteness.
The elements $1,a$ generate $S_7$, because $a^2=0$.
By the universal property, mapping the first two free generators to
$1,a$ and all remaining generators to $1$ gives an onto homomorphism
$F_{\mathcal L}(r)\to S_7$. Thus
$S_7\in\Var(F_{\mathcal L}(r))$. Since
$\mathcal N\subseteq\Var(S_7)$, Theorem~\ref{thm:nilinterval}
gives infinite axiomatic rank for $\Var(F_{\mathcal L}(r))$.

If $A$ is any finite ai-semiring with
$F_{\mathcal L}(r)\in\Var(A)$, then
$S_7\in\Var(F_{\mathcal L}(r))\subseteq\Var(A)$.
Again Corollary~\ref{thm:main} shows that $A$ is nonfinitely based.
This is precisely the asserted strong nonfinite basability.
\end{proof}

An algebra is \emph{inherently nonfinitely based} if every locally
finite variety containing it is nonfinitely based. The following
example distinguishes this property from strong nonfinite basability
for $S_7$ and gives an explicit ambient variety for the preceding
corollary.

\begin{proposition}\label{prop:locallyfiniteambient}
Let $\mathcal L$ be defined by the ai-semiring axioms together with
\[
 xy\approx yx,\qquad x^2\approx x^3.
\]
Then $\mathcal L$ is finitely based and locally finite and contains
$S_7$. Every $F_{\mathcal L}(r)$ with $r\ge2$ is a finite strongly
nonfinitely based semiring. Moreover, $\mathcal L$ is not generated
by a finite semiring.
\end{proposition}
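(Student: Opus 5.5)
Four claims need proof: $\mathcal L$ is finitely based, $\mathcal L$ is locally finite, $S_7\in\mathcal L$, and $\mathcal L$ is not generated by a finite semiring. The assertion about $F_{\mathcal L}(r)$ then follows at once from Corollary~\ref{cor:finitefree}.

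\textbf{Finite basis and containment of $S_7$.} Finite basability holds by definition: $\mathcal L$ is given by finitely many identities. Both $xy\approx yx$ and $x^2\approx x^3$ can be checked directly in $S_7$. Commutativity is visible from the multiplication table. For the second identity, $1^2=1=1^3$, $a^2=0=a^3$, and $0^2=0^3$.

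\textbf{Local finiteness.} Every member of $\mathcal L$ has a multiplicative reduct that is a commutative semigroup satisfying $x^2\approx x^3$. So every word in $r$ variables equals a monomial $x_1^{e_1}\cdots x_r^{e_r}$ with each $e_i\in\{0,1,2\}$, not all zero. There are therefore at most $3^r-1$ distinct words. A term is a finite sum of words, and additive idempotence together with commutativity of addition reduces it to a nonempty subset of these words. Hence $|F_{\mathcal L}(r)|\le 2^{3^r-1}$, so $\mathcal L$ is locally finite.

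\textbf{$\mathcal L$ is not finitely generated.} Suppose $\mathcal L=\Var(A)$ for a finite $A$. Then $A\in\mathcal L$ and $S_7\in\Var(A)$, so Corollary~\ref{thm:main} makes $A$ nonfinitely based. But $\Var(A)=\mathcal L$ is finitely based, a contradiction. This also shows how the example separates the two properties: $\mathcal L$ is a locally finite, finitely based variety containing $S_7$, so $S_7$ is not inherently nonfinitely based, although it is strongly nonfinitely based.

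\textbf{Main obstacle.} Once Corollary~\ref{thm:main} is available, the only step with any content is this last one. It hinges on observing that a finite generator $A$ would itself lie in $\mathcal L$ and contain $S_7$ in its variety, so that the main corollary applies to $A$. Everything else is a direct check.
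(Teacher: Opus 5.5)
Your proposal is correct and follows the paper's proof essentially step by step. It uses the same normal-form count for local finiteness; your bound $2^{3^r-1}$ is the paper's $2^{3^r-1}-1$ without discarding the empty subset. It also uses the same table check for $S_7\in\mathcal L$, Corollary~\ref{cor:finitefree} for the free algebras, and Corollary~\ref{thm:main} applied to a hypothetical finite generator $A$ to contradict the finite basis of $\mathcal L$.
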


\begin{proof}
The displayed description is a finite identity basis.
To prove local finiteness, fix $r\ge1$ variables.
By distributivity, every term is a nonempty sum of nonempty words.
Commutativity allows each word to be written as a product of powers
of the variables, and $x^2\approx x^3$ reduces every exponent greater
than two to two. Thus there are at most $3^r-1$ possible nonempty
words, corresponding to exponent vectors in
$\{0,1,2\}^r\setminus\{(0,\ldots,0)\}$.
Commutativity and idempotence of addition reduce a polynomial to the
sum of a nonempty subset of these words. It follows that
\[
 |F_{\mathcal L}(r)|\le 2^{\,3^r-1}-1.
\]
Every $r$-generated member is an image of $F_{\mathcal L}(r)$, so
$\mathcal L$ is locally finite.

The tables of $S_7$ show that its multiplication is commutative and
that $x^2=x^3$ for each of its three elements. Hence $S_7\in\mathcal L$.
Corollary~\ref{cor:finitefree} applies to all $r\ge2$.
If $\mathcal L=\Var(A)$ for a finite semiring $A$, then
Corollary~\ref{thm:main} would make $\mathcal L$ nonfinitely based,
contrary to its defining finite basis.
\end{proof}

In particular, $S_7$ is strongly nonfinitely based but is not
inherently nonfinitely based.

\section{Naming neutral elements}
\label{sec:constants}

Write $\Var_1$ for generated varieties of unital ai-semirings in
$\{+,\cdot,1\}$, $\Var_0$ for generated varieties in
$\{+,\cdot,0\}$ satisfying
\[
 x+0\approx x,\qquad x0\approx0x\approx0,
\]
and $\Var_{0,1}$ for generated varieties in $\{+,\cdot,0,1\}$
satisfying both sets of neutral-element laws.

For an ai-semiring $C$, recall that $C^\bot$ is obtained by adjoining a new
element $\bot$, with
\[
 c+\bot=\bot+c=c,\qquad c\bot=\bot c=\bot
 \quad(c\in C^\bot).
\]
The symbol $0$ is interpreted by $\bot$.
Thus $S_7^\bot$ is the usual four-element extension $S_7^0$.
Its named additive zero is different from the absorbing top of its
three-element subsemiring $S_7$.
The ordinary nonfinite basis problem for this extension, and more
general zero adjunctions of commutative flat semirings, was considered
in \cite{WRZ,ZW}. Here the conclusions concern arbitrary finite
upper algebras and the specified signatures with constants.

The adjunctions below are changes of algebra, not changes of notation.
For example, if $C$ has an absorbing top $z$ with $z^2=z$, then
$C^\bot$ has the two distinct multiplicative idempotents $z,\bot$.
Its multiplicative reduct is therefore not nilpotent, even when
that of $C$ is nilpotent.

Preservation of variety membership under this adjunction in the
binary signature was established in
\cite[Corollary~2.2]{GRZcontinuum}. We give the corresponding direct
argument with the named constants retained.

\begin{lemma}\label{lem:zero-adjunction}
If $C\in\Var(S_7)$, then $C^\bot\in\Var_0(S_7^\bot)$.
If instead $C\in\Var_1(S_7)$, then
$C^\bot\in\Var_{0,1}(S_7^\bot)$.
\end{lemma}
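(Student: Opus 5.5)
The plan is to show that the operation $C\mapsto C^\bot$ is compatible with the basic constructions that produce $\Var(S_7)$ from $S_7$. By Birkhoff's HSP theorem, $C\in\Var(S_7)$ means that $C$ is a homomorphic image of a subalgebra $D$ of a direct power $S_7^I$. So I will check that adjoining $\bot$ turns each step into the matching step in the signature $\{+,\cdot,0\}$, which gives $C^\bot\in\mathsf{HSP}_0(S_7^\bot)$. The unital case is handled the same way, with the constant $1$ carried along at each step.

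\textbf{Step 1 (homomorphic images).} If $\varphi:D\to C$ is an onto homomorphism, extend it by $\bot\mapsto\bot$. Since $\bot$ is additively neutral and multiplicatively absorbing in both algebras, and $D$ is closed under both operations, the extended map respects $+$, $\cdot$ and the constant $0$. It is onto $C^\bot$.

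\textbf{Step 2 (subalgebras).} If $D\le E$ in the binary signature, then $D^\bot$ is a $\{+,\cdot,0\}$-subalgebra of $E^\bot$. In the unital case, $D$ contains the $1$ of $E$, and $1\ne\bot$.

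\textbf{Step 3 (direct powers).} This is the main obstacle, since $(S_7^I)^\bot$ is not itself a power of $S_7^\bot$. I would embed $(S_7^I)^\bot$ into $(S_7^\bot)^I$ by the map sending each tuple $f\in S_7^I$ to $f$, and $\bot$ to the constant tuple $\bot$. I need to check that this is injective and a homomorphism:
\begin{itemize}
\item A product of two tuples from $S_7^I$ stays in $S_7^I$, because no coordinate of such a tuple is $\bot$.
\item Sums of such tuples also stay in $S_7^I$.
\item Sums and products involving the constant tuple $\bot$ behave coordinatewise exactly as the adjoined $\bot$ does.
\end{itemize}
In the unital case, the constant tuple $1$ is preserved.

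\textbf{Step 4 (assembly).} Chaining the steps gives
\[
C^\bot\in \mathsf H(D^\bot),\qquad D^\bot\le(S_7^I)^\bot\hookrightarrow(S_7^\bot)^I .
\]
Hence $C^\bot\in\Var_0(S_7^\bot)$. The neutral-element laws $x+0\approx x$ and $x0\approx 0x\approx 0$ hold in $C^\bot$ by construction. For $C\in\Var_1(S_7)$, I take the HSP presentation in the signature $\{+,\cdot,1\}$ and use the unital versions of the steps above, which gives $C^\bot\in\Var_{0,1}(S_7^\bot)$.
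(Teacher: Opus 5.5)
Your proposal is correct and is essentially the paper's argument. The paper likewise takes $D\le S_7^I$ with an onto $h:D\to C$ and observes that $D\cup\{\bot^I\}\le(S_7^\bot)^I$ is isomorphic to $D^\bot$, which is your Steps 2 and 3 combined. It then extends $h$ by $\bot^I\mapsto\bot$ and repeats this in the unital signature.

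The one detail you should add is that $I$ must be nonempty. For $I=\varnothing$ the constant tuple $\bot^I$ coincides with the unique element of $S_7^I$, so your Step 3 map is not injective. The paper explicitly chooses a representation with $I\neq\varnothing$; for example, a trivial $C$ is an image of $\{0\}\le S_7$, or of $\{1\}$ in the unital case.
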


\begin{proof}
In the first case, Birkhoff's theorem provides a subalgebra
$D\le S_7^I$ and an onto homomorphism $h:D\to C$.
We may take $I$ nonempty, using the representation of a relatively
free algebra by term functions if necessary.
Inside $(S_7^\bot)^I$, the algebra $D\cup\{\bot^I\}$ is isomorphic to
$D^\bot$: its new tuple is an additive identity and multiplicative
zero. The map $h$ extends to an onto homomorphism to $C^\bot$ by
mapping the new tuple to the new element, preserving the named
zero. This proves the first assertion.
For the second assertion, apply the same argument to a representation
in the unital signature; the subalgebra and the extended homomorphism
then preserve the named multiplicative identities as well.
\end{proof}

If $C$ is flat with absorbing top $z$ and every element has square
$z$, its \emph{flat unit adjunction}, when it is an ai-semiring,
is denoted by $C^1$. It is obtained by adjoining a new multiplicative
identity $1$ and stipulating $1+c=z$ for $c\in C$.

\begin{lemma}\label{lem:unit-adjunction}
Suppose that $C$ is a flat member of $\Var(S_7)$ and every square
in $C$ equals its absorbing top $z$. Then $C^1$ is an ai-semiring
and $C^1\in\Var_1(S_7)$.
\end{lemma}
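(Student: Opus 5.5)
Two things have to be shown: that $C^1$ is an ai-semiring, and that it satisfies every identity of $S_7$ in the unital signature. For the first, flat addition on $C^1$ stays associative, commutative and idempotent, since $z$ absorbs addition and $1+c=z$. Multiplication is associative because $1$ is a new identity. For distributivity we check $x(y+w)=xy+xw$ case by case. The only nontrivial cases involve $1$. If $x=1$, both sides equal $y+w$. If $y=1$ and $w=c\in C$ with $x\in C$, the left side is $xz=z$ and the right side is $x+xc$. We need $x+xc=z$, which holds by flatness provided $xc\ne x$. Now $xc=x$ would give $x=xc^n$ for all $n$, hence $x=xc^2=xz=z$, and then $x+xc=z$ anyway. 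If $y=w=1$, the identity is trivial. If $x=1$ and the summands mix, it is again trivial. Commutativity of $C$ is not assumed, so the mirrored cases need the same check, and it goes through identically.

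For membership in $\Var_1(S_7)$, first translate Proposition~\ref{prop:criterion} to unital terms. A unital term is a sum of words, possibly including the empty word $1$. Setting $1$ to the multiplicative identity of $S_7$, the proof of~\cite[Proposition~5.5]{JRZ} should extend verbatim: $S_7\models u\approx v$ in $\{+,\cdot,1\}$ if and only if $\cont(u)=\cont(v)$, $\delta(u)=\delta(v)$, and the empty word occurs in $u$ exactly when it occurs in $v$. The last condition matters because the empty word is met by no transversal, so its presence forces $\delta(u)=\varnothing$. We verify this by evaluating all variables at $1$ or $a$, where a transversal corresponds to the set of variables sent to $a$. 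Since this is the step I am least sure is literally in~\cite{JRZ}, I would reprove it directly along these lines, evaluating at $\{1,a\}$ and at $0$.

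Now fix such an identity $u\approx v$ and an assignment into $C^1$. Variables assigned $1$ can be deleted from every word. This deletion preserves both the criterion and the presence or absence of the empty word: deleting variables outside a transversal leaves it a transversal, and $\delta$ of the reduced polynomial is the set of $Z\in\delta(u)$ avoiding the deleted variables. The deleted forms may now contain the empty word, yielding $u'\approx v'$ with $1$-summands. If the values differ, then by flatness at least one side, say $u'$, is nonzero with all monomials equal to some $r$. If $r=1$, every monomial of $u'$ is empty, so $\cont(u')=\varnothing$ and $v'$ is also a sum of empty words. If $r\in C\setminus\{z\}$, then $u'$ has no empty word, hence neither does $v'$. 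In that case $u'\approx v'$ is an identity of $S_7$ in the binary signature: any $Z\in\delta(u')$ corresponds to a genuine transversal of the original identity, so $\delta(u')=\delta(v')$ follows from equality of the original $\delta$'s. Since $C\in\Var(S_7)$, the two sides then agree in $C$.

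The main obstacle is the bookkeeping in this deletion step. We must confirm that $\cont$ and $\delta$ of $u'$ and $v'$ still coincide, and that an empty summand appears on one side exactly when it appears on the other. The latter holds because an empty summand arises precisely when some word has all its variables assigned $1$, and whether such a word exists is read off from the transversal data: $Z=\cont(u)\setminus\{\text{variables assigned }1\}$-type sets detect it.
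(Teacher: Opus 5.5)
\paragraph{Review.} Your verification that $C^1$ is an ai-semiring is correct. The membership half has a genuine gap, and it lies in the unital criterion you propose.

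The ``only if'' direction of that criterion is false: the empty word need not occur on both sides of a unital identity of $S_7$. For example, $1+x\approx x^2$ holds in $S_7$ with its unit named (check $x=1,a,0$), and it is exactly the identity on which the paper's proof rests. The correct statement is just $\cont(u)=\cont(v)$ and $\delta(u)=\delta(v)$, where an empty summand contributes no variables and forces $\delta=\varnothing$.

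Your deletion bookkeeping fails for the same reason. The binary identity $y+x^2\approx x^2y^2$ of $S_7$ (both sides have $\delta=\varnothing$) becomes $1+x^2\approx x^2$ under $y\mapsto1$. So an empty summand can arise on one side only, and the transversal data cannot detect this. Hence the step ``$u'$ has no empty word, hence neither does $v'$'' is unjustified. The case in which one side has value $r\in C\setminus\{z\}$ while the other contains the empty word is never treated.

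A symptom of this is that your membership argument never uses the hypothesis that all squares equal $z$, although the conclusion forces it: $C^1\models1+x\approx x^2$ gives $c^2=1+c=z$. Apply your argument verbatim to $C=S_7$, which is flat and lies in $\Var(S_7)$ but has $1^2=1$. It would show that $S_7$ with a new flat unit adjoined satisfies $1+x\approx x^2$. This fails when $x$ is the old identity of $S_7$, and the argument breaks precisely at the step above.

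\paragraph{The missing case.} This is where the square hypothesis enters. Suppose $u'$ has value $r\in C\setminus\{z\}$ and $v'$ contains the empty word. Then $\delta(u')=\delta(v')=\varnothing$. Since $u'$ has only nonempty words, $\cont(u')=\{x_1,\ldots,x_m\}\ne\varnothing$. By Proposition~\ref{prop:criterion}, $u'\approx(x_1+\cdots+x_m)^2$ is a binary identity of $S_7$, so it holds in $C$. Thus $r$ is a square and equals $z$, a contradiction.

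With the empty-word condition dropped and this case added, your route works; your observation that deletion preserves $\cont$ and $\delta$ is correct.

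\paragraph{Comparison with the paper.} Your route is more computational than the paper's. There one takes $h\colon D\to C$ onto with $D\le S_7^I$. The square condition and nontriviality of $C$ show that the constant tuple $e$ is not in $D$. Since $e+d=d^2\in D$, the set $D\cup\{e\}$ is a unital subalgebra, and $h$ extends by $e\mapsto1$. This gives the ai-semiring property and membership in $\Var_1(S_7)$ at once, with no unital identity criterion needed.
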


\begin{proof}
If $C=\{z\}$, then $C^1$ is isomorphic to the unital subalgebra
$\{1,0\}$ of $S_7$. Assume that $C$ is nontrivial.
Choose an onto homomorphism $h:D\to C$ with $D\le S_7^I$ and
$I\ne\varnothing$, and let $e$ be the constant tuple $1$.
If $e\in D$, then $h(e)$ is a multiplicative identity of $C$
and is idempotent. The square condition forces $h(e)=z$, and
then every $c\in C$ satisfies $c=h(e)c=zc=z$, a contradiction.
Thus $e\notin D$.

The identity $1+x=x^2$ holds in $S_7$ when its unit is named.
It follows that $e+d=d^2\in D$ for $d\in D$, while $ed=de=d$.
Hence $D\cup\{e\}$ is a unital subalgebra of $S_7^I$.
Extend $h$ by sending $e$ to the new element $1$ of $C^1$.
For $d\in D$ the image of $e+d=d^2$ is
$h(d)^2=z=1+h(d)$, so mixed sums are preserved.
Mixed products are preserved because both added elements are units,
and the operations on $D$ are preserved by $h$.
The extension is onto the prescribed algebra $C^1$, proving that
$C^1$ is an ai-semiring and belongs to $\Var_1(S_7)$.
\end{proof}

\begin{lemma}\label{lem:witness-unit}
The flat unit adjunction $B(\HH)^1$ is an ai-semiring.
If every at most $n$-generated subsemiring of $B(\HH)$ belongs to
$\Var(S_7)$, then every at most $n$-generated unital subalgebra of
$B(\HH)^1$ belongs to $\Var_1(S_7)$.
\end{lemma}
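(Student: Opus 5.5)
The plan is to derive both assertions from Lemma~\ref{lem:unit-adjunction}, applied once to $B(\HH)$ itself and once to each finitely generated subsemiring.

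\emph{First assertion.} Take $C=B(\HH)$. It is flat, its absorbing top is $0$, and every square equals $0$. Lemma~\ref{lem:unit-adjunction} requires $C\in\Var(S_7)$, which we do not have for $B(\HH)$ in general. So I would verify the ai-semiring axioms for $B(\HH)^1$ directly:
\begin{itemize}
\item Addition remains flat, hence a semilattice.
\item Multiplication with an adjoined identity is associative.
\item Distributivity holds whenever the outside factor is $1$.
\item If a summand is $1$ and the outside factor is $x\in B(\HH)$, we need $x(1+y)=x+xy$. The left side is $x0=0$.
\item On the right, $xy$ has strictly larger degree than $x$ when $x\ne0$ and $xy\ne0$, so $xy\ne x$ and the flat sum is $0$. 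If $x=0$, or if $xy=0$, the sum is again $0$.
\item The case $x(1+1)=x=x+x$ is trivial.
\item Distributivity on the other side follows by commutativity.
\end{itemize}
The degree argument is the only point that needs care, and it is immediate from "degrees add in nonzero products".

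\emph{Second assertion.} Let $C'$ be a unital subalgebra of $B(\HH)^1$ generated by at most $n$ elements. Drop any generator equal to $1$, leaving at most $n$ generators $g_1,\dots,g_j$ in $B(\HH)$; if $j=0$, then $C'=\{1\}$, which is trivially in $\Var_1(S_7)$. Let $C$ be the binary subsemiring of $B(\HH)$ generated by $g_1,\dots,g_j$. By hypothesis $C\in\Var(S_7)$. It is flat, contains $0$ as the square of a generator, and $0$ is its absorbing top with every square equal to $0$. Lemma~\ref{lem:unit-adjunction} then gives $C^1\in\Var_1(S_7)$.

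It remains to identify $C'$ with $C^1$, that is, to show $C'=C\cup\{1\}$ inside $B(\HH)^1$. The set $C\cup\{1\}$ is closed under both operations and contains the generators and $1$, so it contains $C'$. Sums of $1$ with elements of $C$ give $0$, which lies in $C$. Conversely, $C'$ contains $C$ and $1$, so the two sets are equal. The operations on $C\cup\{1\}$ are the restrictions from $B(\HH)^1$, and these agree with the definition of the flat unit adjunction $C^1$. Hence $C'\cong C^1\in\Var_1(S_7)$.

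The main obstacle is the first assertion's distributivity check, since Lemma~\ref{lem:unit-adjunction} cannot be invoked for all of $B(\HH)$. The rest is bookkeeping.
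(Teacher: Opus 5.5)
Your proposal is correct and follows the paper's proof. For the first assertion, the paper shows that the enlarged monoid is $0$-cancellative, using your inequality $\deg(xy)>\deg(x)$, and then reuses the flat-distributivity argument of Lemma~\ref{lem:semiring}; you check the new distributivity cases directly, and the case with all three elements in $B(\HH)$ is Lemma~\ref{lem:semiring} itself. For the second assertion, your identification of the generated unital subalgebra with $C^1$ and your appeal to Lemma~\ref{lem:unit-adjunction} are exactly what the paper does.
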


\begin{proof}
The multiplicative semigroup of $B(\HH)$ is $0$-cancellative.
After adjoining a unit, the only possible new cancellation failure
with a nonzero common value would give $xb=x$ with nonzero
$x,b\in B(\HH)$. However, a nonzero product satisfies
$\deg(xb)=\deg(x)+\deg(b)>\deg(x)$.
Thus no such equality is possible. A unit as the fixed left factor
plainly causes no failure; the right cancellation argument is the
same. The enlarged semigroup is therefore $0$-cancellative.
As in Lemma~\ref{lem:semiring}, flat addition then distributes over
multiplication.

Take at most $n$ generators of $B(\HH)^1$. If they all equal $1$,
the generated unital subalgebra is $\{1\}$.
Otherwise let $C$ be the binary subsemiring generated by those lying
in $B(\HH)$. It contains the absorbing top, since it contains a
square. Multiplication by $1$ gives no new element, and addition
to $1$ gives that top. Thus the unital subalgebra in question is
$C^1$. The hypothesis gives $C\in\Var(S_7)$, and
Lemma~\ref{lem:unit-adjunction} gives $C^1\in\Var_1(S_7)$.
\end{proof}

\begin{theorem}\label{thm:namedconstants}
The unital semiring $S_7$ is strongly nonfinitely based in
$\{+,\cdot,1\}$. The semiring $S_7^\bot$ is strongly nonfinitely
based in each of $\{+,\cdot,0\}$ and $\{+,\cdot,0,1\}$, with the
indicated neutral elements named.
In each case, if $K$ denotes this small algebra and $A$ is a finite
ai-semiring with the indicated neutral elements and with
$K\in\Var(A)$, every variety
between $\Var(K)$ and $\Var(A)$ has infinite axiomatic rank.
Here all generated varieties and inclusions are taken in the
corresponding signature with constants.
\end{theorem}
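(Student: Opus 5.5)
For each $n$ I would reuse the hypergraph semiring $B=B(\HH)$ and the identity $\sigma\colon t_{\HH}\approx t_{\HH}+q_{\HH}$ from Proposition~\ref{prop:separation}, applied to the multiplicative reduct $M$ of $A$ and to $n$. I would then adjoin the relevant neutral elements to $B$. The witnesses are $B^1$ in $\{+,\cdot,1\}$, $B^\bot$ in $\{+,\cdot,0\}$, and $(B^1)^\bot$ in $\{+,\cdot,0,1\}$. The overall structure is that of Lemma~\ref{lem:localglobal}, transported verbatim to the signature with constants, since its proof uses only Birkhoff's theorem. So it suffices to show two things: the witness lies outside $\Var(A)$, and every subalgebra generated by at most $n$ elements lies in $\Var(K)$. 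Here $\Var(K)$ is taken in the appropriate signature, and it is contained in every intermediate variety.

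\emph{Separation.} The identity $\sigma$ contains no constants. It holds in the binary reduct of $A$ by Proposition~\ref{prop:powersum}, because $\Pow(M)\models\sigma$. Hence it holds in $A$ in any expanded signature, and so throughout $\Var(A)$. In each witness I evaluate $x_v$ at $a_{\{v\}}\in B$. Every value computed then stays inside the binary subsemiring $B$. So $t_{\HH}$ evaluates to $p$, and $t_{\HH}+q_{\HH}$ evaluates to the absorbing top of $B$, exactly as in Proposition~\ref{prop:separation}. Thus $\sigma$ fails in the witness, which therefore lies outside $\Var(A)$.

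\emph{Local membership.} Fix at most $n$ generators, and let $C$ be the binary subsemiring generated by those lying in $B$, when there are any. Theorem~\ref{thm:local} gives $C\in\mathcal N\subseteq\Var(S_7)$.

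In the unital case, Lemma~\ref{lem:witness-unit} already shows that the generated unital subalgebra is $\{1\}$ or $C^1$, and that it lies in $\Var_1(S_7)$.

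In the case $\{+,\cdot,0\}$, I would check directly that the generated subalgebra is either $\{\bot\}$ or $C^\bot$. Indeed $\bot$ is named; $c+\bot=c$ and $c\bot=\bot$ produce nothing new; and $C$ is closed under the binary operations. The subalgebra $\{\bot\}$ is trivial, and Lemma~\ref{lem:zero-adjunction} gives $C^\bot\in\Var_0(S_7^\bot)$.

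In the case $\{+,\cdot,0,1\}$, the generated subalgebra is $(C^1)^\bot$, or $\{1,\bot\}$ if no generator lies in $B$. The algebra $\{1,\bot\}$ is the subalgebra $\{1,\bot\}$ of $S_7^\bot$. For $(C^1)^\bot$, Lemma~\ref{lem:witness-unit} gives $C^1\in\Var_1(S_7)$, and the second part of Lemma~\ref{lem:zero-adjunction} then yields $(C^1)^\bot\in\Var_{0,1}(S_7^\bot)$. Here I must also confirm that $(B^1)^\bot$ is an ai-semiring in which $1$ remains a unit; this is the routine verification given in the proof of Corollary~\ref{cor:exactorder}, together with $1\cdot\bot=\bot$.

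\emph{Expected obstacle.} The main thing to get right is the bookkeeping of generated subalgebras once constants are named. Named constants always belong to the subalgebra, and I must make sure no unexpected element appears. For example, $1+c=z$ stays in $C$ because $C$ contains the top $z$ as a square, and $1+\bot=1$ introduces nothing new. I must also verify that the modified version of Lemma~\ref{lem:localglobal} applies when $\Var(K)\subseteq\mathcal W\subseteq\Var(A)$ is taken in the constant signature. Once these checks are done, the conclusion follows for every $n$ and every such $\mathcal W$: each $\mathcal W$ has infinite axiomatic rank, and taking $\mathcal W=\Var(A)$ shows that $A$ is nonfinitely based.
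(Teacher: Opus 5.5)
Your proposal is correct and follows the paper's proof. You use the same witnesses $B^1$, $B^\bot$, $(B^1)^\bot$, the same description of the at most $n$-generated subalgebras, and the same appeal to Lemmas~\ref{lem:witness-unit} and~\ref{lem:zero-adjunction} within the constant-signature version of Lemma~\ref{lem:localglobal}.

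The only difference is the separation step. You note that $\sigma$ is constant-free, so $A\models\sigma$ already follows from Proposition~\ref{prop:powersum} applied to the binary reduct of $A$. This is valid and slightly shorter than the paper's construction of $\Pow(M)$ and $\mathcal P(M)$ with named constants.
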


\begin{proof}
Fix $n\ge1$ and a finite ai-semiring $A$ with the indicated neutral
elements in one of the three signatures. Apply
Proposition~\ref{prop:separation} to its
multiplicative reduct $M$, and put
\[
 B=B(\HH),\qquad \sigma=(t_{\HH}\approx t_{\HH}+q_{\HH}).
\]

First consider $\{+,\cdot,1\}$.
Equip $\Pow(M)$ with the named identity $\{1_A\}$.
The map $X\mapsto\sum X$ is an onto unital homomorphism to $A$,
so $A\models\sigma$. Use $B^1$ as the finite algebra violating
$\sigma$. The same assignment of vertex elements as in $B$ gives
this violation, because $B$ is a binary subsemiring of $B^1$ and
$\sigma$ has no constants. By Lemma~\ref{lem:witness-unit}, every
at most $n$-generated unital subalgebra belongs to $\Var_1(S_7)$.

Next consider $\{+,\cdot,0\}$.
Use the full power semiring $\mathcal P(M)$ with named zero
$\varnothing$. The identity $\sigma$ holds there.
When all variable values are nonempty, this is
Proposition~\ref{prop:separation}; if a variable has value
$\varnothing$, then $q_{\HH}$ has value $\varnothing$, since it
contains every variable, so the identity is automatic.
The map $X\mapsto\sum X$, with the empty sum interpreted as $0_A$,
is an onto homomorphism preserving the named zero.
Consequently $A\models\sigma$.

The finite algebra $B^\bot$ still violates $\sigma$ under the
vertex assignment. A subalgebra generated by at most $n$ elements
is either $\{\bot\}$ or $C^\bot$, where $C$ is the binary subsemiring
of $B$ generated by its non-$\bot$ generators. This description
follows from the neutral and absorbing laws for $\bot$.
Since $C\in\Var(S_7)$, Lemma~\ref{lem:zero-adjunction} gives
$C^\bot\in\Var_0(S_7^\bot)$.

Finally, consider $\{+,\cdot,0,1\}$.
Name both $\varnothing$ and $\{1_A\}$ in $\mathcal P(M)$.
The same homomorphism preserves both constants, so
$A\models\sigma$. Use $(B^1)^\bot$ as the finite algebra violating
$\sigma$. An at most $n$-generated subalgebra is obtained by taking
the unital subalgebra of $B^1$ generated by the non-$\bot$ generators
and adjoining $\bot$. If there are no such generators, the unital
subalgebra is $\{1\}$. Lemmas~\ref{lem:witness-unit} and
\ref{lem:zero-adjunction} show that the resulting subalgebra belongs
to $\Var_{0,1}(S_7^\bot)$.

In each case, let $K$ be the small algebra in the statement and let
$\Var(K)\subseteq\mathcal W\subseteq\Var(A)$ in the relevant
signature. The constructed finite algebra satisfies all identities
of $\mathcal W$ in at most $n$ variables: the values in any assignment
generate a subalgebra in $\Var(K)\subseteq\mathcal W$.
It does not belong to $\mathcal W$, because it fails $\sigma$,
which holds in $A$. As $n$ is arbitrary, $\mathcal W$ has infinite
axiomatic rank. Taking $\mathcal W=\Var(A)$ proves strong nonfinite
basability in each of the three signatures.
\end{proof}

\appendix
\section{Finite bases and the number of variables}\label{sec:rank}

For finite algebras in the signature $\{+,\cdot\}$, finite axiomatic
rank is equivalent to finite basability. We include the proof of this
standard fact.

\begin{proposition}\label{prop:rank}
Let $A$ be a finite ai-semiring. If all identities of $A$ follow from
its identities in at most $n$ variables, for some positive integer $n$,
then $A$ has a finite identity basis. Conversely, a finite identity
basis gives such a bound on the number of variables.
\end{proposition}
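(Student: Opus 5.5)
The converse direction is immediate: a finite basis $\Sigma$ involves only finitely many variables, so take $n$ to be the largest number of distinct variables in a member of $\Sigma$. Then $\Sigma\subseteq\Id_n(\Var(A))$, and all identities of $A$ follow from $\Id_n(\Var(A))$.

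For the main direction, the plan is the standard reduction via the relatively free algebra. Let $\mathcal V=\Var(A)$. Since $A$ is finite, $\mathcal V$ is locally finite, and the free algebra $F=F_{\mathcal V}(n)$ on generators $x_1,\ldots,x_n$ is finite. Explicitly, $F$ embeds in $A^{A^n}$ as the algebra of $n$-ary term functions, so $|F|\le |A|^{|A|^n}$. For each element $f\in F$, choose a polynomial $\tau_f$ in $x_1,\ldots,x_n$ representing it. Let $\Sigma$ consist of the following identities:
\begin{itemize}
\item $\tau_f+\tau_g\approx\tau_{f+g}$ and $\tau_f\tau_g\approx\tau_{fg}$ for all $f,g\in F$;
\item $x_i\approx\tau_{x_i}$ for each $i$.
\end{itemize}
This is a finite set of identities of $\mathcal V$ in at most $n$ variables.

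The key step is to show that $\Sigma$ derives every identity of $\mathcal V$ in at most $n$ variables. By induction on term structure, using the operation-table identities, every term $t$ in $x_1,\ldots,x_n$ satisfies $\Sigma\vdash t\approx\tau_{[t]}$, where $[t]$ is the value of $t$ in $F$. Hence if $u\approx v$ holds in $\mathcal V$, then $[u]=[v]$, and so $\Sigma\vdash u\approx v$. An identity in fewer variables is covered by viewing it inside $x_1,\ldots,x_n$, after renaming its variables if necessary. Since the hypothesis says that $\Id_n(\mathcal V)$ implies all identities of $A$, the finite set $\Sigma$ is a basis.

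I expect no real obstacle. The only point needing care is confirming that the operation-table identities reproduce every term: the induction step must handle both binary operations, and the base case is supplied by the identities $x_i\approx\tau_{x_i}$. The finiteness of the signature $\{+,\cdot\}$ is what keeps $\Sigma$ finite. This is exactly why the proposition is stated for that signature, although the same argument also works with finitely many constants.
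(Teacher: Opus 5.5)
Your proposal is correct and follows the paper's proof essentially verbatim. The paper likewise chooses representing terms for the finitely many $n$-ary term functions of $A$ (your $F_{\mathcal V}(n)$), takes the operation-table identities together with $x_i\approx t_{\pi_i}$, proves $\Sigma_n\vdash t\approx t_{\widehat t}$ by induction on terms, handles identities in fewer variables by renaming, and disposes of the converse by the same observation you make.
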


\begin{proof}
Let $F_n$ be the set of $n$-ary term functions of $A$. This set is
finite, since its elements are maps $A^n\to A$. For each $f\in F_n$,
choose a representing term $t_f(x_1,\ldots,x_n)$. Let $\pi_i$ denote
the $i$th projection. Consider the finite set $\Sigma_n$ consisting of
\[
 x_i\approx t_{\pi_i}\qquad(1\le i\le n)
\]
and
\[
 t_f\mathbin{\circ}t_g\approx t_{f\mathbin{\circ}g}
 \qquad(f,g\in F_n,\quad \circ\in\{+,\cdot\}),
\]
where $f\mathbin{\circ}g$ is the pointwise operation on term functions.
All these identities hold in $A$.

Induction on the formation of a term $t$ proves
$\Sigma_n\vdash t\approx t_{\widehat t}$, where $\widehat t$ is the
term function induced by $t$. Thus, if $u\approx v$ is an identity
of $A$ in the variables $x_1,\ldots,x_n$, then
$\widehat u=\widehat v$ and $\Sigma_n\vdash u\approx v$.
Renaming variables shows that $\Sigma_n$ implies every identity of
$A$ in at most $n$ variables. Under the hypothesis, it is therefore
a finite basis for $\Id(A)$.

Conversely, the maximum number of variables occurring in a finite
basis gives the required bound.
\end{proof}

Since the ai-semiring axioms form a finite set, finite basability
relative to ai-semirings and finite basability in the signature
$\{+,\cdot\}$ are equivalent.

\end{document}